\theoremstyle{definition}
\newtheorem{definition}{Definition}[section]
\theoremstyle{plain}
\newtheorem{theorem}{Theorem}[section]
\newtheorem{lemma}{Lemma}[section]
\newtheorem{proposition}{Proposition}[section]
\theoremstyle{remark}
\newtheorem{remark}{Remark}[section]
\theoremstyle{definition}
\begin{document}
\title{HOMOTOPY CLASSIFICATION OF GENERALIZED PHRASES 
IN TURAEV'S THEORY OF WORDS}

\author{FUKUNAGA Tomonori}\thanks{The author is JSPS research fellow (DC)}

\maketitle

\begin{abstract}
In 2005 V. Turaev introduced the theory of topology of words and phrases.
Turaev defined an equivalence relation on 
generalized words and phrases which is called homotopy.
This is suggested by the Reidemeister moves in the knot theory.
Then Turaev gave the homotopy classification of generalized words
with less than or equal to five letters.
In this paper we give the classification of generalized phrases
up to homotopy with less than or equal to three letters.
To do this we construct a new homotopy invariant
for nanophrases over any $\alpha$.      
\end{abstract}

{\bf keywords:} words, phrases, homotopy of words and phrases \\ \par
 Mathematics Subject Classification 2000: Primary 57M99; Secondary 68R15

\markboth{FUKUNAGA Tomonori}{Homotopy Classification of Generalized Phrases}

\section{Introduction.}
In \cite{tu1} and \cite{tu2}, 
V. Turaev introduced the theory of topology of 
words and phrases. Words are finite sequences of letters
in a given alphabet, letters are elements of an alphabet and phrases
are finite sequences of words. Turaev defined generalized words 
which is called \'etale words as follows : Let $\alpha$ be an alphabet
endowed with an involution $\tau:\alpha \rightarrow \alpha$. Let
$\mathcal{A}$ be an alphabet endowed with a mapping 
$|\cdot|:\mathcal{A} \rightarrow \alpha$ which is called a projection.
We call this $\mathcal{A}$ an $\alpha$-alphabet. Then we call a
pair an $\alpha$-alphabet $\mathcal{A}$ and a word on $\mathcal{A}$ 
an \'etale word.
If all letters in $\mathcal{A}$ appear exactly twice, then we call
this \'etale word a nanoword.\par
Turaev introduced an equivalence relation which is called homotopy on 
nanowords. This equivalence relation is suggested 
by the Reidemeister moves in the theory of knots.
Homotopy of nanowords is generated by isomorphism, 
and three homotopy moves. 
The first homotopy move is deformation
that changes $xAAy$ into $xy$. The second homotopy move is deformation that
changes $xAByBAz$ into $xyz$ when $|A|$ is equal to $\tau(|B|)$. 
The third homotopy move is deformation 
that changes $xAByACzBCt$ into $xBAyCAzCBt$ when $|A|$ and $|B|$ are equal
to $|C|$ (Turaev defined more generalized equivalence relation
which is called $S$-homotopy. However, in this paper, we treat only 
homotopy). Turaev defined homotopy of \'etale words via desingularization 
of \'etale words. 
Moreover in \cite{tu2} Turaev defined homotopy of nanophrase
in a similar manner.\par
Theory of words and phrases is applied for studying curves on surfaces.
In \cite{ga} C. F. Gauss studied planar curves via words.
Turaev applied generalized words and phrases for curves and knot
diagrams.
Turaev showed  special cases of the theory of topology of phrases 
corresponds to 
the theory of 
stable equivalent classes of ordered, pointed, oriented multi-component
curves on surfaces and knot diagrams. 
Note that the theory 
stable equivalence classes of ordered, pointed, oriented multi-component
curves on surfaces (respectively knot diagrams) is equivalent to the theory
of ordered open flat virtual links (respectively ordered open virtual
links). In this paper, 
ordered links means each components of links are numerated.
See also \cite{kad}, \cite{ka}, \cite{sw} and \cite{tu5} for more
details.
In this meaning, the theory of topology of words and phrases is 
combinational extension of the theory of virtual knots and links.\par
Now the purpose of this paper is classification of generalized
phrases (in this paper we call it \'etale phrases) up to homotopy.
Turaev gave the homotopy classification of \'etale words with 
less than or equal to five letters in \cite{tu1}.
We will extend this result. More precisely, 
in this paper, we give the classification of \'etale phrases
with less than or equal to three letters.
To do this we use some known invariants which was introduced in
\cite{fu1}, \cite{fu2} and \cite{gi}. 
Moreover we construct a new homotopy invariant.\par
The rest of this paper is constructed as follows.
In the next section 
we review the theory of topology of nanowords, \'etale words,
nanophrases and \'etale phrases.  
In Section 3 we introduce homotopy invariants of nanophrases which
was introduced in \cite{fu1}, \cite{fu2} and \cite{gi}. Then we will
define a new homotopy invariant for nanophrases. 
In Section 4 and Section 5 we give the classification of 
\'etale phrases with less than or equal to three letters 
without the condition on length of phrases.  
               
\section{\'Etale Phrases  and Nanophrases.}
In this section we introduce Turaev's theory of words and phrases
(See \cite{tu1}, \cite{tu2} and \cite{tu3} for more details).
\subsection{\'Etale words and \'etale phrases. }
 In this paper an \emph{alphabet} means a finite set and
\emph{letters} mean its element. A \emph{word} of length $n$ on an
alphabet $\mathcal{A}$ is a mapping $w:\hat{n} \rightarrow \mathcal{A}$
where $\hat{n} := \{1 , 2 , \cdots n \}$ and a \emph{phrase} of 
length $k$ on $\mathcal{A}$ is a finite sequence of words on $\mathcal{A}$,
$(w_{1}|w_{2}|\cdots|w_{k})$. 
A multiplicity of a letter $A \in \mathcal{A}$ in a phrase $P$ on 
$\mathcal{A}$ is a number of $A$ in the phrase $P$. We denote 
multiplicity of $A \in \mathcal{A}$ by $m_{P}(A)$.\par
Let $\alpha$ be an alphabet endowed with an involution 
$\tau:\alpha \rightarrow \alpha$. An \emph{$\alpha$-alphabet} is a
pair (An alphabet $\mathcal{A}$, 
mapping $|\cdot| : \mathcal{A} \rightarrow \alpha$).
We call the mapping $|\cdot|$ \emph{projection}.\par
In \cite{tu5}, V. Turaev defined generalized words
which is called \'etale words. An \emph{\'etale word} over $\alpha$ is a
pair (An $\alpha$-alphabet $\mathcal{A}$, A word on $\mathcal{A}$) and
A \emph{\'etale phrase} over $\alpha$ is a pair 
(An $\alpha$-alphabet $\mathcal{A}$, A phrase on $\mathcal{A}$).
\begin{remark}
Turaev did not define  \'etale phrases explicitly. 
However Turaev considered an equivalent object in \cite{tu2}.     
\end{remark}

A phrase $P$ on an $\alpha$ gives rise to an \'etale phrase $(\alpha, P)$
 where the projection $\alpha \rightarrow \alpha$ is the identity mapping.
In this meaning \'etale phrases are generalization of usual phrases.
\subsection{Nanowords and  nanophrases.}   
A \emph{Gauss word} on an alphabet $\mathcal{A}$ is a word $w$ on $\mathcal{A}$
which all letters in $\mathcal{A}$ appear exactly twice in $w$. 
A phrase $P$ on $\mathcal{A}$ is called a \emph{Gauss phrase}
if all letters in $\mathcal{A}$ appear exactly twice in $P$.\par
In this paper, we consider generalized Gauss words and Gauss phrases.
 A \emph{nanoword} over $\alpha$ is a
pair (An $\alpha$-alphabet $\mathcal{A}$, A Gauss word on $\mathcal{A}$) and
A \emph{nanophrase} over $\alpha$ is a pair 
(An $\alpha$-alphabet $\mathcal{A}$, A Gauss phrase on $\mathcal{A}$). 
Instead of writing $(\mathcal{A},P)$ for a nanophrase over $\alpha$, 
we often write simply $P$. The alphabet $\mathcal{A}$ 
can be uniquely recovered. However the projection 
$|\cdot|:\mathcal{A} \rightarrow \alpha$ should be always specified.
\subsection{Desingularization of  \'etale phrases.}
In this section, we introduce a method of associating with 
any \'etale phrases over $\alpha$ $(\mathcal{A}, P)$ a nanophrase
over $\alpha$ $(\mathcal{A}^{d}, P^{d})$ which is called 
\emph{desingularization} of \'etale phrases.\par
Let $\mathcal{A}^d$ be an $\alpha$-alphabet 
$\{A_{i,j}:=(A,i,j) |A \in \mathcal{A}, 1 \le i < j \le
 m_P(A) \}$ with the projection $|A_{i,j}|:=|A|$ for all
 $A_{i,j}$. 
The phrase $P^d$ is obtained from $P$ by first deleting all $A \in
 \mathcal{A}$ with $m_P(A)$ is less than or equal to one. 
Then for each $A \in \mathcal{A}$ with
 $m_P(A)$ is grater than or equal to two and each $i = 1,2, \ldots m_P(A)$, 
we replace the $i$-th
 entry of $A$ in $P$ by $$A_{1,i}A_{2,i} \ldots
 A_{i-1,i}A_{i,i+1}A_{i,i+2} \ldots A_{i,m_P(A)}.$$
The resulting $(\mathcal{A}^d,P^d)$ is a nanophrase with $\sum
m_P(A)(m_P(A)-1)$ letters and called a 
\emph{desingularization of $(\mathcal{A},P)$}.
Note that if $(\mathcal{A},P)$ is a nanophrase, then desingularization
of $(\mathcal{A},P)$ is a itself. 
\subsection{Homotopy of nanophrases and \'etale phrases.}
In \cite{tu1} and \cite{tu2}, 
Turaev defined an equivalence relation which is called 
homotopy on a set of nanophrases and \'etale words.\par
To define homotopy, we define isomorphism of \'etale phrases.
 A \emph{morphism} of $\alpha$-alphabets
$\mathcal{A}_1$, $\mathcal{A}_2$ is a set-theoric mapping 
$f:\mathcal{A}_1 \rightarrow \mathcal{A}_2$ such that $|A|=|f(A)|$ 
for all $A \in \mathcal{A}_1$. If $f$ is bijective, then this morphism 
is an \emph{isomorphism}. 
Two \'etale phrases $(\mathcal{A}_1,(w_{1}|\cdots|w_{k})$ and
$(\mathcal{A}_2,(v_{1}|\cdots|v_{k}))$ over $\alpha$ are 
\emph{isomorphic} if there is 
an isomorphism $f:\mathcal{A}_1 \rightarrow \mathcal{A}_2$ such that 
$v_{j} = f \circ w_{j}$ for all $j \in \hat{k}$.\par
Next we define homotopy moves of nanophrases.
\begin{definition}
 We define \textit{homotopy moves} (1) - (3) of nanophrases as follows:
 \par
 (1) $(\mathcal{A} , (xAAy)) \longrightarrow 
(\mathcal{A} \setminus \{ A \} , (xy))$ 
 \par \enskip \enskip \enskip 
for all $A \in \mathcal{A}$ and $x,y$ are sequences of letters in 
$\mathcal{A} \setminus \{ A \}$, possibly including \par
\enskip \enskip \enskip the $|$ character.
 \par
(2) $(\mathcal{A} , (xAByBAz)) \longrightarrow (\mathcal{A} \setminus \{ A , B \} , (xyz))$
   \par \enskip \enskip \enskip 
if $A , B \in \mathcal{A}$ satisfy $|B| = \tau (|A|)$. $x,y,z$ are sequences of
letters in 
$\mathcal{A} \setminus \{A,B\}$, \par
\enskip \enskip \enskip possibly including $|$ character.
 \par
(3) $(\mathcal{A} , (xAByACzBCt)) \longrightarrow (\mathcal{A} , (xBAyCAzCBt))$
 \par \enskip \enskip \enskip if $A,B,C \in \mathcal{A}$ 
satisfy $|A|=|B|=|C|$. $x,y,z,t$ are sequences of letters in \par
\enskip \enskip \enskip $\mathcal{A}$, possibly including $|$ character.
\end{definition}
Now we define homotopy of \'etale phrases.
\begin{definition}
 Two \'etale phrases $(\mathcal{A}_1 ,P_1)$ and $(\mathcal{A}_2 , P_2)$
 over $\alpha$ are \textit{homotopic}
 (denoted $(\mathcal{A}_1 , P_1) \simeq (\mathcal{A}_2 , P_2)$)
 if $((\mathcal{A}_2)^{d} , (P_2)^{d})$ can be obtained from 
$((\mathcal{A}_1)^{d}, (P_1)^{d})$ by a
 finite sequence of isomorphism, homotopy moves 
 (1) - (3) and the inverse of moves (1) - (3).
\end{definition}

\begin{remark}
By the definition of homotopy of \'etale phrases, 
every homotopy invariant $I$ of nanophrases extends to 
a homotopy invariant $I$ of \'etale phrases by 
$I(P):=I(P^{d})$.
\end{remark}

The gale of this paper is to classify \'etale phrases of length $k$
over $\alpha$ with less than or equal to three letters up to homotopy
for any $k$ and $\alpha$.\par
The case of $k$ is equal to one (in other words, the case of 
\'etale words) Turaev gave the classification as follows.
\begin{theorem}[Turaev \cite{tu1}]\label{tu}
A multiplicity-one-free word of length less than or equal to
four in the alphabet $\alpha$ has one of the following forms:
$aa$, $aaa$, $aaaa$, $aabb$, $abba$, $abab$ with distinct $a,b \in \alpha$
The words $aa$, $aabb$, $abba$ are contractible. The words $aaa$ and
$aaaa$ are contractible if and only if $\tau(a)=a$. The word $abab$
is contractible if and only if $\tau(a)=b$. Non-contractible words of 
type $aaa$, $aaaa$ and $abab$ are homotopic if and only if 
they are equal.   
\end{theorem}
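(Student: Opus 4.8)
The plan is to reduce everything to nanowords and to classify those. First I would settle the enumeration: in a word of length at most four with no letter of multiplicity one, every letter occurs at least twice, so either one letter fills the whole word ($aa$, $aaa$, $aaaa$) or two letters each occur twice ($aabb$, $abab$, $abba$ up to relabelling), which is the stated list. Since homotopy of \'etale words is defined on desingularizations, I would then pass to the nanowords $P^{d}$. As no letter has multiplicity one, nothing is deleted; a letter of multiplicity two yields a single chord, so $aa\mapsto XX$, $aabb\mapsto XXYY$, $abba\mapsto XYYX$, $abab\mapsto XYXY$ are already nanowords, while the desingularization procedure of Section~2.3 gives $aaa\mapsto XYXZYZ$ and a six-letter nanoword for $aaaa$, all projections being the ambient $a$ (or $a,b$).

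The contractible cases fall into two kinds. For $aa$, $aabb$, $abba$ the nanowords $XX$, $XXYY$, $XYYX$ contain an adjacent repeated letter, so iterated use of move~(1) reduces them to the empty phrase with no hypothesis on $\tau$. The conditional cases, namely $aaa$ and $aaaa$ when $\tau(a)=a$ and $abab$ when $\tau(a)=b$, are genuinely more delicate: the nanowords $XYXZYZ$ and $XYXY$ admit no length-reducing move, since their chords are pairwise crossing or disjoint but never nested-adjacent, so neither move~(1) nor move~(2) applies, and move~(3), acting only on triples of equal projection, cannot change the crossing of two chords of different projection. Here I would first raise the complexity by an inverse move, introducing an auxiliary cancelling pair, then use move~(3) to slide the new chords into a nested-adjacent position, and finally cancel by move~(2). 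The crucial point is the condition $|B|=\tau(|A|)$ in move~(2): when $\tau(a)=a$ it permits cancelling two chords of the single class $a$, and when $\tau(a)=b$ it permits cancelling an $a$-chord against a $b$-chord, which is precisely the hypothesis under which the contraction goes through. Producing these explicit sequences is the step I expect to be the main obstacle.

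For the converse implications and the uniqueness clause I would use a homotopy invariant $I$ of nanophrases that is unchanged by moves~(1)--(3) and hence descends to \'etale words by $I(P):=I(P^{d})$. The invariant must record, for each $\tau$-orbit of $\alpha$, the portion of the crossing (linking) data that cannot be removed; schematically it is a based-matrix / linking-type quantity valued in a group ring of $\alpha$. I would verify that $I$ vanishes on the empty phrase, compute it on $XYXZYZ$, on the six-letter word, and on $XYXY$, and check that it is nonzero exactly when the relevant $\tau$-condition fails, thereby proving that $aaa$ and $aaaa$ are non-contractible for $\tau(a)\neq a$ and that $abab$ is non-contractible for $\tau(a)\neq b$.

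Finally, the assertion that non-contractible words of type $aaa$, $aaaa$, $abab$ are homotopic only when equal would follow by showing that $I$ separates them: its value must remember both the projection datum, the single class $a$ versus the two-element set $\{a,b\}$, and enough of the crossing structure to distinguish $aaa$ from $aaaa$. Two such words with equal invariant are then forced to be identical, completing the classification. Besides the explicit contractions flagged above, the remaining technical heart is to confirm that the chosen $I$ is simultaneously demonstrably move-invariant and fine enough to separate the three non-contractible families.
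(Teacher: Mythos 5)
First, a point of reference: the paper you were given does not prove this statement at all. It is quoted as Turaev's classification of \'etale words (Theorem~\ref{tu}, attributed to \cite{tu1}) and then used as a black box in the proofs of Lemma~\ref{lem1}, Lemma~\ref{lem2} and Lemma~\ref{lem3}. So there is no internal proof to compare against, and your attempt has to be judged as a reconstruction of Turaev's own argument.

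Judged that way, your outline has the right architecture --- the enumeration is complete, and your desingularizations are computed correctly ($aa\mapsto XX$, $aaa\mapsto XYXZYZ$, $aaaa\mapsto$ a six-letter nanoword, $abab\mapsto XYXY$), as are the move-(1) contractions of $XX$, $XXYY$, $XYYX$ --- but it is an outline, not a proof, and the two places where all the mathematical content lives are exactly the two places you defer. (i) You never produce the move sequences contracting $XYXZYZ$ and the $aaaa$-word when $\tau(a)=a$, nor $XYXY$ when $\tau(a)=\vert Y\vert$; you correctly observe that no length-reducing move applies directly and that one must first lengthen the word by inverse moves, but ``slide the new chords into nested-adjacent position by move (3)'' is a hope rather than an argument, and finding these sequences is genuinely fiddly --- it is where Turaev expends real effort. (ii) The invariant $I$ is never defined: ``a based-matrix / linking-type quantity valued in a group ring'' describes what Turaev's self-linking function and based matrices are, but it is not a construction. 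Without a definition you cannot verify invariance under moves (1)--(3), cannot compute $I$ on the three desingularized words, and in particular cannot carry out the final separation step (distinguishing $aaa$ from $aaaa$ when $\tau(a)\neq a$, and distinguishing words built on different letters), which you reduce to the unverified assertion that $I$ ``remembers enough.'' Note also that the phrase invariants of Section 3 of this paper (the linking vector, $T$, $S_{o}$, $U_{L}$) do not rescue you here: they are designed for phrases of length $k\geq 2$ and are essentially trivial or unavailable on one-component words, so the separating invariant really must come from \cite{tu1}. In short: correct skeleton, consistent with how Turaev actually argues, but both load-bearing steps are missing.
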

\section{Homotopy Invariants of Nanophrases.}
By the definition of homotopy of \'etale phrases, 
we need homotopy invariants of nanophrases.
In this section, we introduce homotopy invariants of nanophrases
which were defined in \cite{fu1}, \cite{fu2} and \cite{gi}. 
Moreover we define a new homotopy invariant of nanophrases.
\subsection{Simple invariants.}
In this subsection, we review homotopy invariants which were defined in
\cite{fu2} and \cite{gi}.\par
Let $P = (w_{1}|w_{2}| \cdots |w_{k})$ be a nanophrase over $\alpha$. 
For $l \in \hat{k}$, we define $w(l) \in \mathbb{Z}/2\mathbb{Z}$ by 
the length of $w_{l}$. We call the vector 
$$w(P):=(w(1), \cdots ,w(k)) \in (\mathbb{Z}/2\mathbb{Z})^{k}$$ 
the \emph{component length vector}.
\begin{proposition}[A. Gibson \cite{gi}, see also \cite{fu1}]\label{clv}
The component length vector is a homotopy invariant of nanophrases.
\end{proposition}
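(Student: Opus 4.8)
The plan is to verify directly that $w(P)$ is unchanged by each generator of the homotopy relation. Since two nanophrases are homotopic precisely when one is obtained from the other by a finite sequence of isomorphisms, homotopy moves (1)--(3), and their inverses, it suffices to check invariance under each of these operations; invariance under the inverse moves then follows by symmetry. I would also first record the easy but essential observation that none of the moves alters the number of components $k$, since the blocks $x,y,z,t$ retain all of their $|$ characters throughout. Thus $w(P)$ always lives in the same group $(\mathbb{Z}/2\mathbb{Z})^{k}$, and it makes sense to compare the vectors before and after a move.

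The cases of isomorphism and of moves (1) and (3) are immediate. An isomorphism $f$ is a bijection of $\alpha$-alphabets with $v_{j}=f\circ w_{j}$, so each word $w_{j}$ and its image $v_{j}$ have equal length; hence the integer length vector, and a fortiori its reduction $w(P)$ modulo $2$, is preserved. Move (3) replaces the block $xAByACzBCt$ by $xBAyCAzCBt$: no letter is created or destroyed and every word keeps its length exactly, so $w(P)$ is unchanged on the nose. For move (1) the pair $AA$ is written adjacently and therefore lies inside a single word $w_{l}$; deleting it decreases the length of $w_{l}$ by $2$ and leaves all other words untouched, so $w(l)$, and hence $w(P)$, is preserved modulo $2$.

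The only case requiring a little care is move (2), $(xAByBAz)\longrightarrow(xyz)$. Here the pair $AB$ is adjacent, hence contained in one word, and the pair $BA$ is adjacent, hence contained in one word; but because $y$ may contain the symbol $|$, these two pairs need not lie in the same word. Nevertheless, each word of the phrase loses exactly $0$, $2$, or $4$ letters under the move: $0$ if it meets neither pair, $2$ if it contains exactly one of $AB$, $BA$, and $4$ if it contains both. In every case the number of deleted letters in each component is even, so each $w(l)$ is unchanged modulo $2$ and $w(P)$ is preserved. This \emph{even-deletion} observation is the crux of the argument, and I expect it to be the only step that is not entirely automatic; once it is in place the proposition follows, since $w(P)$ is then invariant under all generators of the homotopy relation and is therefore a homotopy invariant of nanophrases.
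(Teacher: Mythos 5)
Your proof is correct. Note that the paper itself gives no proof of this proposition --- it is quoted as a known result of Gibson \cite{gi} (see also \cite{fu1}) --- so there is nothing internal to compare against; but your move-by-move verification, with the key \emph{even-deletion} observation that under move (2) each component loses $0$, $2$, or $4$ letters (since the adjacent pairs $AB$ and $BA$ each lie inside a single word, though possibly different words when $y$ contains the $|$ character), is exactly the standard argument, and it follows the same template the paper uses for its own Proposition on the invariance of $U_{L}$: check isomorphism and the three homotopy moves case by case, with inverse moves handled by symmetry.
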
  
Next we define another homotopy invariant.
Let $\pi$ be the group which is defined as follows:
$$\pi := (a \in \alpha|a\tau(a)=1 , ab = ba \ for \ all 
\ a, b \in \alpha \ ).$$ 
Let $(w_1|w_2|\cdots|w_k)$ be a nanophrase of
length $k$ over $\alpha$.
We define $l_{P}(i,j) \in \pi$ for $i < j$
by
$$l_{P}(i,j) := \prod_{A \in Im(w_i) \cap Im(w_j)} |A|.$$ 
We call a vector 
$lk(P) := (l_{P}(1,2),l_{P}(1,3),\cdots,l_{P}(1,k),l_{P}(2,3),
\cdots,l_{P}(k-1,k)) \in \pi^{\frac{1}{2}k(k-1)}$ 
the \emph{linking vector}. 
\begin{proposition}[\cite{fu2}]\label{lv}
The linking vector of nanophrases is a homotopy invariant of nanophrases.
\end{proposition}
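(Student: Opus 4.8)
The plan is to use that homotopy of nanophrases is generated by isomorphism together with the homotopy moves (1)--(3) and their inverses, so it suffices to check that $lk(P)$ is unchanged by an isomorphism and by each of the three moves; invariance under the inverse moves then follows formally. Invariance under isomorphism is immediate, since an isomorphism $f$ satisfies $|f(A)|=|A|$ and carries each $\mathrm{Im}(w_i)$ bijectively onto the image of the corresponding word, so every factor of every $l_P(i,j)$ is preserved. The observation that drives the whole argument is that $l_P(i,j)$ depends only on the \emph{sets} $\mathrm{Im}(w_i)$ and $\mathrm{Im}(w_j)$ of letters occurring in the two words, and not on the order or the positions of those letters within the phrase.

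Granting this, move (3) is essentially free. The move replaces the three adjacent pairs $AB$, $AC$, $BC$ by $BA$, $CA$, $CB$ while leaving the surrounding data $x,y,z,t$ (and hence all word separators) fixed; in particular each reordering takes place inside a single word, and no letter is created, destroyed, or moved across a $|$. Therefore $\mathrm{Im}(w_i)$ is unchanged for every $i$, and so is every $l_P(i,j)$.

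Move (1) is also straightforward. The two occurrences of the deleted letter $A$ are adjacent (they form the block $AA$), hence lie in one and the same word $w_l$. Consequently $A\in\mathrm{Im}(w_i)\cap\mathrm{Im}(w_j)$ for no pair $i<j$, so $A$ contributes to none of the factors $l_P(i,j)$, and deleting it leaves $lk(P)$ untouched.

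The heart of the proof is move (2), and this is where I expect the only real work. Here the adjacency of $A,B$ in the block $AB$ and of $B,A$ in the block $BA$ forces the first $A$ and first $B$ to share a word, say $w_p$, and the second $B$ and second $A$ to share a word, say $w_q$, with $p\le q$. I would then split into two cases according to whether $y$ contains a word separator. If it does not, all four occurrences lie in the single word $w_p=w_q$, so $A$ and $B$ contribute to no cross term and the deletion is harmless, exactly as in move (1). If it does, then $p<q$, and each of $A$ and $B$ has exactly one occurrence in $w_p$ and one in $w_q$; thus both $|A|$ and $|B|$ appear as factors of $l_P(p,q)$ and of no other $l_P(i,j)$. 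Deleting $A$ and $B$ removes precisely these two factors, changing $l_P(p,q)$ by $\bigl(|A|\,|B|\bigr)^{-1}$. Because $\pi$ is abelian and $|B|=\tau(|A|)$, we have $|A|\,|B|=|A|\,\tau(|A|)=1$ in $\pi$, so $l_P(p,q)$ is in fact unchanged. Both defining relations of $\pi$ are used in an essential way here: commutativity lets me isolate the factor $|A|\,|B|$ from the rest of the product, and the relation $a\tau(a)=1$ makes that factor trivial. This disposes of all three moves and completes the proof.
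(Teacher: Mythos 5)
Your proof is correct, and it is the expected argument: check invariance of $lk$ under isomorphism and the three moves, with the only substantive case being move (2), where the factors $|A|$ and $|B|$ either both disappear from every cross term (no separator in $y$) or cancel in the single product $l_P(p,q)$ by commutativity and $|A|\,\tau(|A|)=1$. Note that this paper does not actually prove Proposition \ref{lv} --- it cites the proof to \cite{fu2} --- but your case analysis mirrors exactly the style of proof the paper gives for its own $U_L$ invariant, and I see no gaps in it.
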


\subsection{The invariant $T$.}
In this section we introduce a homotopy invariant $T$ which was
defined by the author in \cite{fu1}. This invariant is defined for nanophrases
over $\alpha_{0}$ and the one-element set 
where $\alpha_{0}=\{a,b\}$ with the involution $\tau_{0}:a \mapsto b$.

\begin{definition}
Let $P=(\mathcal{A},(w_1|\cdots|w_k))$ be a nanophrase
over $\alpha_0$ and $A$,$B \in \mathcal{A}$. Then we define
$\sigma_P(A,B)$ as follows: If $A$ and $B$ form
$\cdots A \cdots B \cdots A \cdots B \cdots$ in $P$ and $|B|=a$, or 
$\cdots B \cdots A \cdots B \cdots A \cdots$ in $P$ and $|B|=b$, then
$\sigma_P(A,B):= 1$. If $\cdots A \cdots B \cdots A \cdots B \cdots$ in
 $P$ and $|B|=b$, or $\cdots B \cdots A \cdots B \cdots A \cdots$
 in $P$ and $|B|=a$, then $\sigma_P(A,B):=-1$. Otherwise $\sigma_P(A,B):=0$.  
\end{definition}

\begin{definition}
For $A \in \mathcal{A}$ we define $\varepsilon(A) \in \{ \pm 1 \} $ by
$$\varepsilon(A):=\begin{cases} 1 \ ( \ if \ |A|=a \  ),
 \\ -1 \ ( \ if \ |A|=b \ ). \end{cases}$$
\end{definition}

\begin{definition}
Let $P=(\mathcal{A}, (w_1|w_2|\cdots|w_k))$ be a nanophrase of
length $k$ over $\alpha_0$. For $A \in \mathcal{A}$ such that
 there exist $i \in \{1,2,\cdots,k\}$ such that $Card(w_i^{-1}(A))=2$, 
we define $T_P(A) \in \mathbb{Z}$ by
$$T_P(A):= \varepsilon(A)\sum_{B \in \mathcal{A}}\sigma_P(A,B),$$
and we define $T_P(w_i) \in \mathbb{Z}$ by
$$T_P(w_i):= \sum_{A \in \mathcal{A},\ Card(w_i^{-1}(A))=2 }T_P(A).$$
Then we define $T(P) \in \mathbb{Z}^k$ by 
$$T(P):=(T_P(w_1),T_P(w_2),\cdots,T_P(w_k)).$$
\end{definition}

\begin{theorem}[\cite{fu1}]
 $T$ is a homotopy invariant of nanophrases over $\alpha_0$.
\end{theorem}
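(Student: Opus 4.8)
The plan is to show that $T$ is unchanged under isomorphism and under each of the homotopy moves (1)--(3); since the defining formulas are symmetric under reversing a move, invariance under the inverse moves then follows for free. Isomorphism invariance is immediate, because an isomorphism $f$ satisfies $|f(A)|=|A|$ and respects the linear order of occurrences, hence preserves $\sigma_P$, $\varepsilon$, and the partition of letters among the words $w_i$. For the homotopy moves the convenient device is to write $\sigma_P(A,B)=o_P(A,B)\,\varepsilon(B)$, where $o_P(A,B)\in\{+1,-1,0\}$ equals $+1$ if $A,B$ interleave with an occurrence of $A$ coming first, $-1$ if they interleave with $B$ first, and $0$ otherwise; one checks directly that this matches the definition, and that $o_P(A,B)=-o_P(B,A)$. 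The crucial elementary remark is that erasing letters never alters the relative order of the surviving occurrences, so for any two letters $C,D$ both different from the letters removed by a move, $o_P(C,D)$ and hence the summand $o_P(C,D)\varepsilon(D)$ of $T_P(C)$ are unaffected. Thus for the erasing moves the only changes to a surviving $T_P(C)$ come from the summands indexed by the erased letters. Move (1), $xAAy\to xy$, is then trivial: the two occurrences of $A$ are adjacent, so $A$ interleaves with nothing, $o_P(C,A)=0$ for all $C$ and $T_P(A)=0$; every surviving $T_P(C)$ is unchanged and the vanishing term $T_P(A)$ (which sat in the unique word containing $AA$) drops out harmlessly.

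For move (2), $xAByBAz\to xyz$ with $|B|=\tau(|A|)$, the occurrences are nested as $A\,B\cdots B\,A$, so the only region that can contain a third letter is $y$ (together with the outer regions $x,z$). The key lemma I would prove is that $o_P(C,A)=o_P(C,B)$ for every other letter $C$: I run through the finitely many placements of the two occurrences of $C$ among $x,y,z$ (both outside, both inside $y$, or split) and check each time that $C$ interleaves with $A$ exactly when it interleaves with $B$, with the same order sign. Since $|B|=\tau(|A|)$ forces $\varepsilon(B)=-\varepsilon(A)$, this gives $\sigma_P(C,A)+\sigma_P(C,B)=o_P(C,A)\bigl(\varepsilon(A)+\varepsilon(B)\bigr)=0$. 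Consequently each surviving $T_P(C)$ is unchanged, and summing the same identity over $C$ yields $T_P(A)+T_P(B)=0$. Whether $A$ and $B$ lie inside one word or straddle a separator, their net contribution to the relevant component $T_P(w_i)$ is therefore zero, and the whole vector $T(P)$ is preserved.

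Move (3), $xAByACzBCt\to xBAyCAzCBt$ with $|A|=|B|=|C|$ (so $\varepsilon(A)=\varepsilon(B)=\varepsilon(C)$), erases nothing: the six occurrences permute within six fixed positions while $x,y,z,t$ stay put. I would first verify that for every external letter $D$ the data $o_P(D,A)$, $o_P(D,B)$, $o_P(D,C)$ are unchanged. Although the intervals spanned by $A$, by $B$ and by $C$ move, each of them encloses the same subset of the gaps $x,y,z,t$ before and after the move (the $A$-interval encloses $y$, the $B$-interval encloses $y$ and $z$, the $C$-interval encloses $z$), and the remaining gaps stay on the same side of each interval; hence both the interleaving status and the order sign with any $D$ are preserved. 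Thus $T_P(D)$ is unchanged for all external $D$, and the external part of each of $T_P(A),T_P(B),T_P(C)$ is unchanged as well. It then remains only to compare the order signs among the three internal pairs, computed from the explicit orders $A\,B\,A\,C\,B\,C$ before and $B\,A\,C\,A\,C\,B$ after. A short tabulation shows that the sums $o_P(A,B)+o_P(A,C)$, $o_P(B,A)+o_P(B,C)$, and $o_P(C,A)+o_P(C,B)$ are each individually unchanged, so $T_P(A),T_P(B),T_P(C)$ are each separately preserved, and therefore so is every component $T_P(w_i)$, no matter how $A,B,C$ are distributed among the words.

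I expect the main obstacle to be move (3): one must be sure that the interleavings with all external letters really are unchanged (the ``same enclosed gaps, same side'' argument) and then carry through the small but sign-sensitive computation for the three internal pairs while keeping the convention $\sigma_P(A,B)=o_P(A,B)\varepsilon(B)$ straight. The feature common to all three moves that needs care is that the invariant is recorded word by word, whereas $\sigma_P$ is computed globally across the whole phrase; this is resolved in each case by showing that the per-letter quantities $T_P(\cdot)$ that could possibly move are in fact preserved (move (1), move (3)) or cancel in pairs (move (2)), so that no interaction between different words can ever spoil a component of $T(P)$.
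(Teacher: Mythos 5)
There is nothing in this paper to compare your proof against: the statement is imported from \cite{fu1} and the paper gives no proof of it, only the citation (the only invariance proof actually written out in the paper is the one for $U_{L}$, which follows the same move-by-move template you use). So I can only judge your argument on its own merits, and it is correct and complete. The factorization $\sigma_P(A,B)=o_P(A,B)\varepsilon(B)$ with $o_P$ antisymmetric does match the definition (all four sign cases check out), and it is exactly the right device for the bookkeeping. Your key lemma for move (2), $o_P(C,A)=o_P(C,B)$ for every third letter $C$, holds in all placements of $C$ among $x,y,z$ (the interleaving patterns $CACA$/$CBCB$, $ACAC$/$BCBC$, or non-interleaving occur simultaneously), and together with $\varepsilon(B)=-\varepsilon(A)$ and the antisymmetry of $o_P$ it gives both the invariance of each surviving $T_P(C)$ and the cancellation $T_P(A)+T_P(B)=0$; note that the step ``summing the identity over $C$'' silently uses $o_P(A,C)=-o_P(C,A)$, which you did state, so this is only a looseness of phrasing, not a gap. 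Your tabulation for move (3) also verifies: before the move the internal sums $o(A,B)+o(A,C)$, $o(B,A)+o(B,C)$, $o(C,A)+o(C,B)$ are $1,0,-1$, and after the move ($ABAC\,BC \mapsto BACA\,CB$, giving $o(A,B)=0$, $o(A,C)=1$, $o(B,C)=0$) they are again $1,0,-1$. The point you rightly emphasize, and which is the one place a careless proof would fail, is that individual (not merely total) preservation of $T_P(A),T_P(B),T_P(C)$ is needed in move (3), and that in move (2) the letters $A$ and $B$ are either both counted in the same component $T_P(w_i)$ (no separator in $y$, so their contributions cancel) or both uncounted (separator in $y$); your argument handles both correctly, since the moves never transport a surviving occurrence across a ``$|$'' separator.
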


Next we define an invariant $T$ for nanophrases over the one-element set
(we use the same notation "$T$" because of the Remark \ref{rem1}).\par

\begin{definition}
Let $P:=(\mathcal{A},(w_1|\cdots|w_k))$ be a nanophrase
 over the one-element set $\alpha:=\{a\}$.
Let $A,B \in \mathcal{A}$ be letters. 
Then we define $\tilde{\sigma}_{P}(A,B) \in \mathbb{Z}/2\mathbb{Z}$ 
as follows: If $A$ and $B$
 forms $\cdots A \cdots B \cdots A \cdots B \cdots$ or  
$\cdots B \cdots A \cdots B \cdots A \cdots$ in P, 
then  $\tilde{\sigma}_{P}(A,B):=1$. Otherwise  $\tilde{\sigma}_{P}(A,B):=0$.
\end{definition}

\begin{definition}
Let $P:=(\mathcal{A},(w_1|\cdots|w_k))$ be a nanophrase over
$\alpha:=\{a\}$. For $A \in \mathcal{A}$ such that there exist an
 $i \in \{1,2,\cdots,k\}$ such that $Card(w_i^{-1}(A))=2$, we define
$T_P(A) \in \mathbb{Z}/2\mathbb{Z}$ by 
$$T_P(A):=\sum_{B \in \mathcal{A}}\tilde{\sigma}_P(A,B) \in
 \mathbb{Z}/2\mathbb{Z},$$
and $T_P(w_i) \in \mathbb{Z}/2\mathbb{Z}$ by
$$T_P(w_i):= \sum_{A \in \mathcal{A},\ Card(w_i^{-1}(A))=2 }T_P(A).$$
Then we define $T(P) \in (\mathbb{Z}/2\mathbb{Z})^k$ by 
$$T(P):=(T_P(w_1),T_P(w_2),\cdots,T_P(w_k)).$$
\end{definition}

Then the next theorem follows.

\begin{theorem}[\cite{fu1}]
 $T$ is a homotopy invariant of nanophrases over the one-element set. 
\end{theorem}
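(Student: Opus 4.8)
The plan is to verify that $T$ is unchanged under isomorphism and under each of the homotopy moves (1)--(3); since each move and its inverse generate the same relation, checking the three forward moves suffices. Invariance under isomorphism is immediate, because $\tilde{\sigma}_P$, the collection of letters that appear twice inside a single word, and hence each $T_P(w_i)$, depend only on the linear order of the occurrences in the phrase, which an isomorphism preserves. The substance is therefore the three moves, and throughout I would use that over the one-element set $\tilde{\sigma}_P(A,B)=1$ exactly when the chords of $A$ and $B$ cross, i.e.\ exactly one occurrence of $B$ lies strictly between the two occurrences of $A$, all sums being taken mod $2$.

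For move (1), $xAAy \to xy$, the two occurrences of the deleted letter $A$ are adjacent, so no other letter can lie strictly between them; hence $\tilde{\sigma}_P(A,C)=0$ for every $C$. This gives $T_P(A)=0$, so deleting $A$ removes only a zero summand from its word, while $\tilde{\sigma}_P(C,A)=0$ shows that no surviving $T_P(C)$ changes. For move (2), $xAByBAz \to xyz$, the four occurrences read $A\,B\,\cdots\,B\,A$, a nested pair. The key step is the lemma that $\tilde{\sigma}_P(C,A)=\tilde{\sigma}_P(C,B)$ for every other letter $C$: placing the occurrences of $A,B$ at positions $p_1<p_2<p_3<p_4$ with $A$ at $p_1,p_4$ and $B$ at $p_2,p_3$, and noting that nothing sits in the adjacent gaps $(p_1,p_2)$ or $(p_3,p_4)$, one checks that an interval spanned by two occurrences of $C$ contains $p_1$ iff it contains $p_2$ and contains $p_4$ iff it contains $p_3$; so it holds the same number of $A$'s as $B$'s, and $C$ crosses $A$ iff it crosses $B$. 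Hence $\tilde{\sigma}_P(C,A)+\tilde{\sigma}_P(C,B)\equiv 0$, leaving each surviving $T_P(C)$ unchanged, and the symmetric computation gives $T_P(A)+T_P(B)\equiv 0$; a short case split on whether the separator $y$ contains a $|$ (which decides whether $A,B$ contribute to any word at all) closes this move.

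For move (3), $xAByACzBCt \to xBAyCAzCBt$, no letter is deleted and the word-membership of every occurrence is preserved, so the index sets defining the $T_P(w_i)$ are unchanged. Since the rewriting only transposes letters within the three local slots while leaving $x,y,z,t$ fixed, any letter $D$ with both occurrences outside the six marked positions keeps all of its crossings with $A,B,C$, so the only $\tilde{\sigma}$-values that change are the three internal ones, which I would compute directly to be $\tilde{\sigma}_P(A,B)\colon 1\to 0$, $\tilde{\sigma}_P(A,C)\colon 0\to 1$, $\tilde{\sigma}_P(B,C)\colon 1\to 0$. Feeding these into $T_P(A)$, $T_P(B)$, $T_P(C)$, each receives exactly two changed summands whose changes sum to $0 \pmod 2$, so every $T_P(w_i)$ is preserved. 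I expect the main obstacle to lie in the bookkeeping for moves (2) and (3): one must argue carefully that external linkings are untouched and that the placement of the phrase separators cannot interfere, so that all the apparent changes genuinely cancel modulo $2$.
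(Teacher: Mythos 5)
Your proof is correct: the mod-2 cancellation lemma for move (2) (that any surviving letter crosses $A$ iff it crosses $B$, and $T_P(A)+T_P(B)\equiv 0$) and the explicit computation for move (3) (crossings $1\to 0$, $0\to 1$, $1\to 0$, so each of $A,B,C$ picks up two changes summing to $0$ mod $2$) both check out, as does the word-membership bookkeeping. The paper itself gives no proof of this theorem, only a citation to \cite{fu1}; your argument is the standard direct verification of invariance under isomorphism and the three homotopy moves, which is exactly the style of proof the paper uses for its own invariance result (Proposition 3.3 on $U_L$), so there is nothing to flag.
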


\begin{remark}\label{rem1}
In a preprint \cite{fu2}, the author extended the invariant $T$ to
nanophrases over any $\alpha$. However in this paper we only use this
invariant for nanophrases over $\alpha_{0}$ and nanophrases over the
one element set. 
\end{remark}

\subsection{The invariant $S_{o}$.}
In \cite{gi} A.Gibson defined a homotopy invariant of nanophrases over the 
one-element set which is stronger than the invariant $T$ for
nanophrases over the one-element set. In this subsection we introduce
Gibson's $S_{o}$ invariant.\par
First we define some notations. 
Let $(\mathcal{A},P = (w_{1}|\cdots|w_{k}))$ be a nanophrase over
the one-element set.
For a letter $A \in \mathcal{A}_{i} := 
\{A \in \mathcal{A} | Card(w_{i}^{-1}(A))=2  \}$, 
we define
$l_{j}(A) \in \mathbb{Z} / 2\mathbb{Z}$ as follows : When we write $P$ as
$xAyAz$ where $x$, $y$ and $z$ are words in $\mathcal{A}$ 
possibly including "$|$" character, $l_{j}(A)$ is modulo 2 of 
the number of letters which appear exactly once in $y$ and once in
the j-th component of the phrase $P$. Then we define 
$l(A)\in (\mathbb{Z} / 2\mathbb{Z})^{k}$ by 
$$l(A):=(l_{1}(A),l_{2}(A),\cdots,l_{k}(A)).$$
Let $v$ be a vector in $(\mathbb{Z} / 2\mathbb{Z})^{k}$. Then
we define $d_{j}(v) \in \mathbb{Z}$ 
by $$d_{j}(v):=Card(\{ A \in \mathcal{A}_{j}|l(A)=v \}),$$
and we define $B_{j}(P) \in 2^{(\mathbb{Z}/2\mathbb{Z})^{k}}$ by
$$B_{j}(P):=\{v \in (\mathbb{Z}/2\mathbb{Z})^{k} \setminus \{0\}|
d_{j}(v)=1 \ mod \ 2 \}.$$
Then we define the $S_{o}(P) \in (2^{(\mathbb{Z}/2\mathbb{Z})^{k}})^{k}$ by
$$S_{o}(P) := (B_{1}(P),B_{2}(P), \cdots ,B_{k}(P)).$$
\begin{theorem}[Gibson \cite{gi}]
$S_{o}$ is a homotopy invariant of nanophrases over the one-element set.
\end{theorem}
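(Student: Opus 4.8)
The plan is to verify directly that $S_o(P)$ is unchanged by isomorphisms and by each of the three homotopy moves (together with their inverses); since homotopy is generated by these, invariance follows. Invariance under isomorphism is immediate, because the vectors $l(A)$, the numbers $d_j(v)$, and hence each $B_j(P)$ are defined purely in terms of the ordering of the occurrences and of which component each occurrence lies in, none of which depends on the names of the letters. For the three moves I would keep in mind that a letter $D$ appearing twice inside a single word $w_j$ is the only kind of letter that contributes to $d_j$, and hence to $B_j$, and I would track, move by move, how the vectors $l(D)$ and the memberships $D\in\mathcal{A}_j$ change.

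For move (1), $xAAy \to xy$, the two occurrences of $A$ are adjacent, so the arc between them is empty and $l(A)=0$, which is excluded from every $B_j$ by definition; moreover, for any other letter $D$ the block $AA$ lies entirely inside or entirely outside the arc of $D$, so $A$ never appears exactly once in that arc and contributes nothing to $l(D)$. Hence no $l(D)$ and no membership changes, and $S_o$ is preserved. For move (2), $xAByBAz \to xyz$, the key observation is that $A$ and $B$ occur in lockstep: because $AB$ and $BA$ are adjacent blocks, any other letter's arc contains the same number of occurrences of $A$ as of $B$, so $A$ and $B$ make equal contributions to every $l(D)$ and these cancel modulo $2$. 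One then splits into two cases according to whether the two occurrences of $A$ (equivalently of $B$) lie in a single word or in two different words: in the first case both $A$ and $B$ lie in some $\mathcal{A}_m$ with the common value $l(A)=l(B)$, so $d_m$ drops by $2$ and every $B_j$ is unchanged; in the second case neither $A$ nor $B$ lies in any $\mathcal{A}_j$, so again nothing changes.

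The main work, and the step I expect to be the real obstacle, is move (3), $xAByACzBCt \to xBAyCAzCBt$. First I would show that for every letter $D \notin \{A,B,C\}$ the vector $l(D)$ is untouched: the move only transposes the adjacent pairs $A_1B_1$, $A_2C_1$, $B_2C_2$, so no occurrence of $A$, $B$ or $C$ crosses an occurrence of $D$, and the count of each of $A,B,C$ inside the arc of $D$ is preserved. It then remains to control $A,B,C$ themselves, and here I would run a case analysis on the coincidences among the three words containing the blocks $AB$, $AC$, $BC$. When all three words are distinct, none of $A,B,C$ lies in any $\mathcal{A}_j$ and there is nothing to check; when all three coincide, each of $A,B,C$ occurs twice in that single word and zero times elsewhere, so they contribute nothing to one another's $l$-vectors and a short arc computation shows $l(A),l(B),l(C)$ are each preserved; the delicate case is exactly two coincidences, where precisely one of $A,B,C$ lies in some $\mathcal{A}_j$. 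There I would compute the change in its $l$-vector explicitly: the move alters the arc of that repeated letter by inserting or deleting occurrences of the other two letters, and the coincidence hypothesis forces those two letters to lie in the same pair of components, so their contributions to each coordinate of the $l$-vector coincide and cancel modulo $2$. Granting these computations, every $B_j(P)$ is preserved by all three moves; the inverse moves are then automatic, and $S_o$ is a homotopy invariant.
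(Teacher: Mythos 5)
There is nothing in the paper to compare your argument against: the paper does not prove this theorem at all, but imports it from Gibson's paper \cite{gi} by citation; the only invariance proof the paper actually writes out is the far easier one for $U_{L}$. Judged on its own, your verification is correct, and it has the same move-by-move structure as the paper's $U_{L}$ proof, except that here genuine parity cancellations must be tracked. The key points all check: in move (1) the deleted letter has $l(A)=0$, which every $B_{j}$ excludes, and adjacency prevents it from counting in any other arc; in move (2) adjacency of the blocks $AB$ and $BA$ gives $A$ and $B$ equal multiplicity in every arc and the same component profile, so their contributions to each $l_{j}(D)$ cancel mod $2$, while in the case $A,B\in\mathcal{A}_{m}$ the arc of $A$ is $ByB$ (in which $B$ occurs twice, hence does not count), so $l(A)=l(B)$ and $d_{m}$ drops by two; in move (3), with $i_{1},i_{2},i_{3}$ the components of the blocks $AB$, $AC$, $BC$, your trichotomy is exhaustive, and in the two-coincidence case, say $i_{1}=i_{2}=m\neq i_{3}$, the arc of $A$ changes from $By$ to $yC$ while $B$ and $C$ each occur exactly once in $w_{m}$ and once in $w_{i_{3}}$, so their contributions agree coordinatewise and $l(A)$ is unchanged; the remaining subcases work out with the cancellation appearing either as two equal contributions summing to zero mod $2$ or as multiplicities jumping from one to two. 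The only caveat is presentational: several steps are stated conditionally (``I would compute'', ``granting these computations''), and a final write-up must display them, but as checked above they do go through, so there is no mathematical gap.
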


\subsection{The invariant $U_{L}$.}
In this section we introduce a new invariant of nanophrases.\par
First we prepare some notations. Since the set $\alpha$ is a finite set,
we obtain following orbit decomposition of the $\tau$ : 
$\alpha/\tau = \{ \widehat{a_{i_1}}, \widehat{a_{i_2}}, \cdots 
,\widehat{a_{i_l}}, \widehat{a_{i_{l+1}}},\cdots, 
\widehat{a_{i_{l+m}}} \}$, where 
$\widehat{a_{i_j}}:= \{ a_{i_j}, \tau(a_{i_j}) \}$ such that 
$Card(\widehat{a_{i_j}})=2$ for all $j \in \{1,\cdots,l \}$ and 
$Card(\widehat{a_{i_j}})=1$ for all $j \in \{l+1,\cdots,l+m \}$ (we fix
a complete representative system $crs(\alpha/\tau):=
\{ a_{i_1}, a_{i_2}, \cdots ,a_{i_l}, a_{i_{l+1}},\cdots, a_{i_{l+m}} \}$ 
which satisfy the above condition). Let $L$ be a subset of 
$crs(\alpha/\tau)$. For a nanophrase $(\mathcal{A},P)$ over $\alpha$,
we define a nanophrase $U_{L}((\mathcal{A},P))$ over $L \cup \tau(L)$
as follows:
deleting all letters $A \in \mathcal{A}$ such that $|A| \not\in L \cup \tau(L)$
from both $\mathcal{A}$ and $P$.
\begin{proposition}
$U_{L}$ is a homotopy invariant of nanophrases.
\end{proposition}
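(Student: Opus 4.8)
The plan is to show that the map $U_L$ sends homotopic nanophrases to homotopic nanophrases, i.e.\ that $U_L$ is well defined on homotopy classes. Since homotopy is generated by isomorphism together with the three homotopy moves (1)--(3) and their inverses, it suffices to verify that $U_L$ is compatible with each generator. First I would check that $U_L$ respects isomorphism: if $f:\mathcal{A}_1 \rightarrow \mathcal{A}_2$ is an isomorphism of $\alpha$-alphabets, then because $f$ preserves projections ($|A|=|f(A)|$), it restricts to a bijection between the letters of $P_1$ with projection in $L \cup \tau(L)$ and those of $P_2$; hence $U_L$ of the two phrases are isomorphic as nanophrases over $L \cup \tau(L)$.

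The core of the argument is to analyze what happens when a single homotopy move is applied to a nanophrase $P$ to produce $P'$, and then to compare $U_L(P)$ with $U_L(P')$. The key observation is that each homotopy move involves only letters $A,B,C$ that all share a common projection (or related by $\tau$, as in move (2) where $|B|=\tau(|A|)$). So for a given move, either all the participating letters have projection in $L \cup \tau(L)$, or none of them do. In the first case, deleting all letters with projection outside $L \cup \tau(L)$ leaves the participating letters intact, and since deletion commutes with the local rewriting (the deleted letters lie in the arbitrary words $x,y,z,t$), the same homotopy move can be applied to $U_L(P)$ to yield $U_L(P')$. In the second case, the participating letters are themselves deleted by $U_L$, and since a move only rearranges or removes these specific letters while leaving everything else fixed in position, we get $U_L(P)=U_L(P')$ verbatim after deletion.

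For each move I would spell this out concretely: for move (1), $xAAy \to xy$, if $|A| \in L\cup\tau(L)$ the move survives, otherwise $A$ was deleted and both sides give the same result; for move (2), the condition $|B|=\tau(|A|)$ guarantees $|A| \in L \cup \tau(L)$ if and only if $|B| \in L \cup \tau(L)$, since $L \cup \tau(L)$ is $\tau$-invariant by construction, so $A$ and $B$ are deleted together or kept together; for move (3), the hypothesis $|A|=|B|=|C|$ means all three are deleted together or kept together. In each surviving case one must confirm that the letters appearing in the surrounding words $x,y,z,t$ that get deleted occupy positions not interfered with by the move, which is immediate since deletion and the local substitution act on disjoint letters.

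The main obstacle, though largely bookkeeping, is the $\tau$-invariance point in move (2): one must be careful that $L \cup \tau(L)$ is genuinely closed under $\tau$ so that the ``all or none'' dichotomy holds, and that the resulting nanophrase is legitimately a nanophrase over $L \cup \tau(L)$ with the induced projection and involution. Once this is checked for all three moves and their inverses (which follow by symmetry since the moves are reversible), the compatibility with the generators of homotopy yields that $U_L(P) \simeq U_L(P')$ whenever $P \simeq P'$, which is exactly the assertion that $U_L$ is a homotopy invariant.
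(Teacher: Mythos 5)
Your proposal is correct and follows essentially the same route as the paper's proof: a move-by-move case analysis using the ``all participating letters in $L \cup \tau(L)$ or all outside'' dichotomy (which holds because the letters in each move have equal or $\tau$-related projections), with deletion commuting with the local rewriting in the surviving case and both sides coinciding verbatim in the deleted case. Your treatment of the isomorphism case and the explicit remark on $\tau$-invariance of $L \cup \tau(L)$ are slightly more detailed than the paper's, but the argument is the same.
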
     
\begin{proof}
First, isomorphism does not change $U_{L}(P)$ up to isomorphic is clear.\par
Consider the first homotopy move 
$$P_{1}:=(\mathcal{A}, (xAAy)) \longrightarrow P_{2}:=(\mathcal{A} \setminus
\{A\}, (xy))$$
where $x$ and $y$ are words on $\mathcal{A}$, 
possibly including "$|$" character.
Suppose $|A| \in L \cup \tau(L)$. Then
$$U_{L}(P_{1})=x_{L}AAy_{L} \simeq x_{L}y_{L} = U_{L}(P_{2})$$
where $x_{L}$ and $y_{L}$ are words which obtained by deleting all letters
$X \in \mathcal{A}$ such that $X \not\in L \cup \tau(L)$ from $x$ and
$y$ respectively.\par
Suppose $|A| \not\in L \cup \tau(L)$. Then
$$U_{L}(P_{1})=x_{L}y_{L}=U_{L}(P_{2}).$$
So the first homotopy move does not change the homotopy class of $U_{L}(P)$.\par
Consider the second homotopy move
$$P_{1}:=(\mathcal{A}, (xAByBAz)) \longrightarrow 
P_{2} := (\mathcal{A} \setminus \{A,B\}, (xyz))$$
where $|A|=\tau(|B|)$, and $x$, $y$ and $z$ are words on $\mathcal{A}$
possibly including "$|$" character.
Suppose $|A| \in L \cup \tau(L)$. Then $|B| \in L \cup \tau(L)$ 
since $|A|= \tau(|B|)$. So
$$U_{L}(P_{1})=x_{L}ABy_{L}BAz_{L} \simeq x_{L}y_{L}z_{L} = U_{L}(P_{2}).$$
Suppose $|A| \not\in L \cup \tau(L)$. Then $|B| \not\in L \cup \tau(L)$ 
since $|A|= \tau(|B|)$. So
$$U_{L}(P_{1})= x_{L}y_{L}z_{L} = U_{L}(P_{2}).$$
By the above, the second homotopy move does not change the homotopy 
class of $U_{L}(P)$.\par
Consider the third homotopy move 
$$P_{1}:= (\mathcal{A},(xAByACzBCt)) \rightarrow 
P_{2}:=(\mathcal{A},(xBAyCAzCBt))$$
where $|A|=|B|=|C|$, and $x$, $y$, $z$ and $t$ are words on 
$\mathcal{A}$ possibly including "$|$" character.
Suppose $|A| \in L \cup \tau(L)$. Then $|B|, |C| \in L \cup \tau(L)$ since
$|A|=|B|=|C|$. So we obtain
$$U_{L}(P_{1}) = x_{L}ABy_{L}ACz_{L}ACt_{L} \simeq x_{L}BAy_{L}CAz_{L}CBt_{L}
=U_{L}(P_{2}).$$
Suppose $|A| \not\in L \cup \tau(L)$. 
Then $|B|, |C| \not\in L \cup \tau(L)$ since
$|A|=|B|=|C|$. So we obtain
$$U_{L}(P_{1}) = x_{L}y_{L}z_{L}t_{L} = U_{L}(P_{2}).$$
So the third homotopy move does not change 
the homotopy class of $U_{L}(P)$.\par
By the above, $U_{L}$ is a homotopy invariant of nanophrases.   
\end{proof}     

\section{Homotopy Classification of  \'Etale Phrases.}
In this section we classify \'etale phrases with less than or 
equal to three letters up to homotopy. First we recall lemmas 
in \cite{fu1}.

\begin{lemma}\label{plem1}
Let $\beta$ be $\tau$-invariant subset of $\alpha$. If two nanophrases
 over $\beta$ are homotopic in the class of nanophrases over $\alpha$,
 then they are homotopic in the class of nanophrases over $\beta$. 
\end{lemma}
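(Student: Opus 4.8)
The plan is to show that any homotopy between two nanophrases over $\beta$, a priori using the alphabet $\alpha$, can be replaced by a homotopy staying entirely within nanophrases over $\beta$. The key observation is that the three homotopy moves, as well as isomorphism, are \emph{local}: each move alters only the letters explicitly named in it (the $A$, $B$, $C$), while the surrounding words $x,y,z,t$ are untouched. Moreover, the applicability conditions of the moves---$|B|=\tau(|A|)$ for move (2) and $|A|=|B|=|C|$ for move (3)---depend only on the projections of the involved letters. Since $\beta$ is $\tau$-invariant, these conditions never force the introduction of a letter whose projection lies outside $\beta$.

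First I would set up the situation carefully. Let $P$ and $Q$ be nanophrases over $\beta$ that are homotopic as nanophrases over $\alpha$, and fix a finite sequence of isomorphisms and moves (1)--(3) and their inverses realizing this homotopy, producing intermediate nanophrases $P = R_0, R_1, \ldots, R_n = Q$ over $\alpha$. The intermediate nanophrases $R_i$ may, in principle, contain letters whose projection lies outside $\beta$. The crux is to argue that in fact they cannot. I would track the projection of every letter through the sequence. A letter present in $R_i$ either persists from an earlier nanophrase or is created by the inverse of move (1) (creating a pair $AA$ with $|A|$ arbitrary) or the inverse of move (2) (creating $A,B$ with $|B|=\tau(|A|)$); move (3) and its inverse only permute existing letters and create nothing.

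The obstacle, and the heart of the argument, is that the inverse moves (1) and (2) \emph{can} introduce letters with arbitrary projection, so the naive claim that no out-of-$\beta$ letter ever appears is false. The correct strategy is therefore to apply the invariant $U_{L}$ constructed in the previous subsection. Choose $L := crs(\alpha/\tau) \cap \beta$ so that $L \cup \tau(L) = \beta$, using $\tau$-invariance of $\beta$. By the preceding proposition, $U_{L}$ is a homotopy invariant, and applying $U_{L}$ to the entire sequence $R_0, \ldots, R_n$ yields a new sequence $U_L(R_0), \ldots, U_L(R_n)$ in which every letter has projection in $\beta$. I would verify that each original step descends to a valid step (or trivial identity) between the $U_L$-images, exactly as the invariance proof already checks case by case: a move involving letters inside $\beta$ maps to the same type of move, while a move involving letters outside $\beta$ maps to an identity. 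Since $P$ and $Q$ already have all projections in $\beta$, we have $U_L(P)=P$ and $U_L(Q)=Q$, so the new sequence is a homotopy from $P$ to $Q$ entirely within nanophrases over $\beta$, which is what we want.

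The main subtlety to handle with care is the bookkeeping for the inverse moves and for isomorphism: I must confirm that when a move in the original sequence is the inverse of (1) or (2) creating out-of-$\beta$ letters, its $U_L$-image really is the identity (the created letters are simply deleted by $U_L$), and that when it creates in-$\beta$ letters its image is the corresponding inverse move over $\beta$. Likewise an isomorphism of $\alpha$-alphabets restricts to an isomorphism of the $\beta$-alphabets after deletion, because isomorphisms preserve projections. Once these routine verifications are in place---they mirror the three cases already dispatched in the proof that $U_L$ is invariant---the lemma follows immediately.
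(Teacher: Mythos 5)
Your proof is correct. Note, however, that the paper itself does not prove this lemma at all: it is recalled verbatim from the author's earlier paper \cite{fu1} (``First we recall lemmas in \cite{fu1}''), so there is no in-paper argument to compare yours against. What you have produced is a self-contained proof using only machinery available in this paper, namely the invariant $U_{L}$ of Section 3.4. Your key choices are sound: taking $L := crs(\alpha/\tau) \cap \beta$ does give $L \cup \tau(L) = \beta$ precisely because $\beta$ is $\tau$-invariant (each orbit representative of a letter of $\beta$ again lies in $\beta$), and you correctly identify that one cannot merely cite the proposition ``$U_{L}$ is a homotopy invariant'' but must re-run its case analysis on the chain $R_0, \ldots, R_n$ to see that every step descends to a move \emph{over} $\beta$ or to an equality --- including the inverse moves (1) and (2), which can create letters projecting outside $\beta$ and whose $U_L$-images are then identities, and isomorphisms, which restrict to isomorphisms of the deleted alphabets since they preserve projections. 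Combined with $U_L(P)=P$ and $U_L(Q)=Q$, this yields a homotopy entirely within nanophrases over $\beta$. There is no circularity, since the proof of the $U_{L}$ proposition nowhere uses this lemma. The one stylistic gain of your route is that it makes the present paper logically independent of \cite{fu1} for this particular statement; the cost is that it reverses the paper's intended logical order, in which the lemma is treated as prior knowledge and $U_L$ as a new invariant built afterwards.
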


\begin{lemma}\label{plem2}
Let $P_1=(w_1|w_2|\cdots|w_k)$ and  $P_2=(v_1|v_2|\cdots|v_k)$ be 
nanophrases of length $k$ over $\alpha$. If $P_1$ and $P_2$ are
homotopic as nanophrases, then $w_i$ and $v_i$ are homotopic as 
\'etale words for all $i \in \{1,2,,\cdots,k\}$.
\end{lemma}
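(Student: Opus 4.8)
The plan is to reduce to the case of a single generating operation and then show that every such operation on the nanophrase $P_1$ induces, componentwise, either an isomorphism, a literal equality, or a single homotopy move on the desingularized étale words. Since homotopy of étale words is by definition the equivalence relation generated by isomorphism and moves (1)--(3) on the desingularizations, transitivity then lets me chain together the finite sequence of operations carrying $P_1$ to $P_2$; hence it suffices to treat one isomorphism or one homotopy move at a time. For an isomorphism $f\colon\mathcal{A}_1\to\mathcal{A}_2$ of nanophrases, $f$ preserves both the $\mid$-pattern and the projection, so it restricts to an isomorphism of the $i$-th components; thus $w_i$ and $v_i$ are isomorphic, in particular homotopic, as étale words.

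The key structural fact, which I would isolate first, is how desingularization interacts with restriction to a single component: a letter occurring only once in $w_i$ is deleted in $(w_i)^d$, while a letter occurring twice survives (as $A_{1,2}$). Since $P_1$ is a nanophrase, each letter occurs exactly twice in the whole phrase, so within a single component it occurs $0$, $1$, or $2$ times, and only the doubly-occurring ones are retained. This is precisely what makes a block reordering that spans two distinct components invisible on each of those components.

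With this in hand I would go through the three moves. In move (1) the pattern $AA$ is adjacent and therefore lies inside one component $w_j$, where $A$ occurs twice; so $(w_j)^d$ contains the corresponding repeated letter and a type-(1) move produces $(v_j)^d$, while every other component is literally unchanged. In move (2) the blocks $AB$ and $BA$ each lie inside a single component: if both lie in the same $w_j$, then the condition $|A|=\tau(|B|)$ is inherited and this is a genuine type-(2) move on $(w_j)^d$; if they lie in different components, then in each of those components both $A$ and $B$ occur only once and are deleted by desingularization, so those components are unchanged. Move (3) is the main obstacle and requires a short case analysis on how the three adjacent blocks $AB$, $AC$, $BC$ are distributed among the components $c_1,c_2,c_3$.

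For move (3), if all three blocks lie in one component the move restricts to an honest type-(3) move on that component's desingularization, with $|A|=|B|=|C|$ inherited. In every other distribution I would apply the observation above: whenever two of the blocks lie in distinct components, the letters they share occur only once in the relevant components and are deleted, so reordering them does not affect the desingularized word; and any letter that does occur twice inside one component has the two surrounding blocks merely reorder the deleted letters around it, leaving its position relative to the surviving letters fixed. Checking the handful of distributions (all equal; exactly two equal, in its three shapes; all distinct) then shows each component is either unchanged or transformed by a single type-(3) move. The delicate point throughout is the bookkeeping: one must verify carefully that deleting the singly-occurring letters really collapses the reordered blocks to identical words on the surviving letters, so that listing the surviving occurrences in order yields the same sequence before and after the move.
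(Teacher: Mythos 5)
The paper itself gives no proof of this lemma --- it is simply recalled from \cite{fu1} --- so there is no in-paper argument to compare against; judged on its own, your proof is correct and is the natural one. Your reduction to a single isomorphism or single move, combined with the key observation that desingularizing a component deletes precisely the letters occurring once in it (so that a block swap whose blocks lie in different components is invisible on every component's desingularization), handles all cases; in particular your enumeration of distributions for move (3) is sound, since the three blocks $AB$, $AC$, $BC$ occur in a fixed order in the phrase, so the only possibilities are all three in one component, blocks one and two together, blocks two and three together, or all three in distinct components, exactly as you say. The remaining points you leave implicit (inverse moves are handled by the same analysis read backwards, and every intermediate nanophrase in the chain still has length $k$ so componentwise homotopy composes transitively) are genuinely routine.
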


Next we prepare some notations. Let $\alpha$ be an alphabet 
endowed with an involution $\tau:\alpha \rightarrow \alpha$.
Then we set \\
$P^{1,1;l_{1},l_{2}}_{a}
:=(\emptyset|\cdots|\emptyset|\stackrel{l_{1}}{\check{a}}|\emptyset|\cdots|\emptyset|\stackrel{l_{2}}{\check{a}}|\emptyset|\cdots|\emptyset)$,\\
$P^{3;l}_{a}
:=(\emptyset|\cdots|\emptyset|\stackrel{l}{\check{a^{3}}}|\emptyset|\cdots|\emptyset)$,\\
$P^{2,1;l_{1},l_{2}}_{a}
:=(\emptyset|\cdots|\emptyset|\stackrel{l_{1}}{\check{a^{2}}}|\emptyset|\cdots|
\emptyset|\stackrel{l_{2}}{\check{a}}|\emptyset|\cdots|\emptyset)$,\\
$P^{1,2;l_{1},l_{2}}_{a}
:=(\emptyset|\cdots|\emptyset|\stackrel{l_{1}}{\check{a}}|\emptyset|\cdots|
\emptyset|\stackrel{l_{2}}{\check{a^{2}}}|\emptyset|\cdots|\emptyset)$,\\
$P^{1,1,1;l_{1},l_{2},l_{3}}_{a}
:=(\emptyset|\cdots|\emptyset|\stackrel{l_{1}}{\check{a}}|\emptyset|\cdots|\stackrel{l_{2}}{\check{a}}|\emptyset|
\cdots|\emptyset|\stackrel{l_{3}}{\check{a}}|\emptyset|\cdots|\emptyset)$,\\
where $a \in \alpha$ and $l$, $l_{1}$,$l_{2}$,$l_{3} \in \hat{k}$ with 
$l_{1} < l_{2} < l_{3}$.  
Note that if $a=\tau(a)$, then $P^{3;l}_{a}$ is 
homotopic to the nanophrase $(\emptyset)_{k}:=(\emptyset|\cdots|\emptyset)$.
So when we use the notation $P^{3;l}_{a}$, 
we always assume that $a \neq \tau(a)$. 

\begin{remark}
For two different integers $k_{1}$ and $k_{2}$, 
an \'etale phrase of length $k_{1}$ and 
an \'etale phrase of length $k_{2}$
are not homotopic each other.
So we do not write length of phrases in above notations.  
\end{remark}

Now we describe the main results of this paper.

\begin{theorem}\label{mthm}
Let $P$ be a multiplicity-one-free \'etale phrase over $\alpha$
with less than or equal to three letters. Then $P$ is either homotopic
to $(\emptyset)_{k}$ or homotopic to one of the following 
\'etale phrases: $P^{1,1;l_{1},l_{2}}_{a}$, $P^{3;l}_{a}$, 
$P^{2,1;l_{1},l_{2}}_{a}$, $P^{1,2;l_{1},l_{2}}_{a}$, 
$P^{1,1,1;l_{1},l_{2},l_{3}}_{a}$ for some $l_{1}$, $l_{2}$, $l_{3} \in \hat{k}$ 
and $a \in \alpha$. Moreover 
$P^{1,1;l_{1},l_{2}}_{a}$, $P^{3;l}_{a}$, 
$P^{2,1;l_{1},l_{2}}_{a}$, $P^{1,2;l_{1},l_{2}}_{a}$, 
$P^{1,1,1;l_{1},l_{2},l_{3}}_{a}$
are homotopic if and only if they are equal.    
\end{theorem}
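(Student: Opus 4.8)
The plan is to split the argument into a \emph{realization} step, showing every admissible $P$ reduces to one of the listed forms, and a \emph{distinctness} step, showing the listed forms are pairwise non-homotopic.

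For realization I would first note that the hypotheses force $P$ to contain a single genuinely repeated letter. Since $P$ is multiplicity-one-free every occurring letter has multiplicity at least two, and since there are at most three occurrences the only possibilities are one letter $a$ of multiplicity $2$ or of multiplicity $3$; there is no room for a second repeated letter. I would then enumerate how the occurrences of $a$ distribute among $w_1,\dots,w_k$. If the two occurrences (multiplicity $2$) lie in one component the component reads $aa$, whose desingularization $A_{1,2}A_{1,2}$ is removed by homotopy move (1), so $P\simeq(\emptyset)_k$; otherwise we get $P^{1,1;l_1,l_2}_a$. If $a$ has multiplicity $3$ and all three occurrences lie in one component, that component is the \'etale word $a^3$, which by Theorem \ref{tu} is contractible exactly when $\tau(a)=a$ and equals $P^{3;l}_a$ otherwise; the remaining distributions $(2,1)$, $(1,2)$, $(1,1,1)$ are by definition $P^{2,1;l_1,l_2}_a$, $P^{1,2;l_1,l_2}_a$, $P^{1,1,1;l_1,l_2,l_3}_a$. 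This exhausts all cases.

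For distinctness I would evaluate the invariants of Section 3 on the desingularizations. The component length vector (Proposition \ref{clv}) has weight $2$ for $P^{1,1}$ and weight $0$ for the multiplicity-$3$ forms, isolating $P^{1,1}$. The linking vector (Proposition \ref{lv}) then separates the rest: it is trivial for $P^{3}$, carries the value $a^{2}$ on the single pair $(l_1,l_2)$ for $P^{2,1}$ and $P^{1,2}$, and carries $a$ on all three pairs for $P^{1,1,1}$. Two facts about $\pi$ make this effective: $a\neq 1$ always (each generator is nontrivial in its $\mathbb{Z}$- or $\mathbb{Z}/2\mathbb{Z}$-factor), whereas $a^{2}=1$ holds precisely when $\tau(a)=a$. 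Hence $lk$ distinguishes $P^{1,1,1}$ from the others, and when $\tau(a)\neq a$ it separates $\{P^{2,1},P^{1,2}\}$ from $P^{3}$. Varying $l_1,l_2,l_3$ changes the supports of these vectors, and different letters are handled by restricting with $U_L$ to the $\tau$-orbit of $a$ (which deletes any letter from another orbit), using $lk$ or, for $P^{3}$, Turaev's uniqueness clause in Theorem \ref{tu} applied componentwise via Lemma \ref{plem2}.

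The main obstacle is the pair $P^{2,1;l_1,l_2}_a$ versus $P^{1,2;l_1,l_2}_a$, which share both the component length vector and the linking vector, together with the degenerate regime $\tau(a)=a$, where $a^{2}=1$ collapses the linking data of $P^{2,1}$ and $P^{1,2}$. To treat both at once I would apply the new invariant $U_L$ to restrict to the orbit of $a$ — a one-element set when $\tau(a)=a$ and a copy of $\alpha_0$ when $\tau(a)\neq a$, the restriction being legitimate by Lemma \ref{plem1} — and then use the invariant $T$ over $\alpha_0$ when $\tau(a)\neq a$ and its one-element version, or the stronger $S_o$, when $\tau(a)=a$. A direct computation on the desingularizations, in which only $A_{1,2}$ is doubled inside a component of $P^{2,1}$ while only $A_{2,3}$ is doubled inside a component of $P^{1,2}$, yields $T(P^{2,1;l_1,l_2}_a)$ supported at $l_1$ and $T(P^{1,2;l_1,l_2}_a)$ supported at $l_2$ (with opposite signs when $\tau(a)\neq a$). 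Since $l_1\neq l_2$ this separates the two forms in every regime, and when $\tau(a)=a$ it simultaneously separates them from the linking-trivial classes. Assembling these computations shows the five families are pairwise non-homotopic unless equal, completing the classification.
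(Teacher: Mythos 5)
Your realization step is correct, and it is essentially the part the paper leaves implicit: with at most three letters and no letter of multiplicity one there is a single repeated letter, and enumerating how its occurrences distribute among the $k$ components yields exactly the listed normal forms. Most of your distinctness analysis also follows the paper's own route (its Lemmas \ref{lem1}--\ref{lem3}): the component length vector isolates $P^{1,1}_a$ together with its indices, the linking vector isolates $P^{1,1,1}_a$ together with its indices, Theorem \ref{tu} combined with Lemma \ref{plem2} handles $P^{3}_a$, the invariant $U_L$ handles letters lying in different $\tau$-orbits, and $T$ separates $P^{2,1;l_{1},l_{2}}_{a}$ from $P^{1,2;l_{1},l_{2}}_{a}$; your computations $T(P^{2,1;l_1,l_2}_a)=\mathbf{e}_{l_1}$ and $T(P^{1,2;l_1,l_2}_a)=-\mathbf{e}_{l_2}$ agree with the paper's.

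There is, however, a genuine gap in the degenerate regime $\tau(a)=a$. There all three invariants you actually compute collapse for the families $P^{2,1}$ and $P^{1,2}$: the component length vector vanishes; every entry of the linking vector is a power of $a$ with $a^{2}=1$, so $lk$ is identically trivial and your step ``varying $l_{1},l_{2}$ changes the supports of these vectors'' is vacuous; and $T$ (in its $\mathbb{Z}/2\mathbb{Z}$ version) records only one of the two indices, namely $\mathbf{e}_{l_{1}}$ for $P^{2,1;l_{1},l_{2}}_{a}$ and $\mathbf{e}_{l_{2}}$ for $P^{1,2;l_{1},l_{2}}_{a}$. Consequently the pairs $P^{2,1;l_{1},l_{2}}_{a}$ vs.\ $P^{2,1;l_{1},m_{2}}_{a}$ with $l_{2}\neq m_{2}$, $P^{1,2;l_{1},l_{2}}_{a}$ vs.\ $P^{1,2;m_{1},l_{2}}_{a}$ with $l_{1}\neq m_{1}$, and $P^{2,1;l_{1},l_{2}}_{a}$ vs.\ $P^{1,2;m_{1},m_{2}}_{a}$ with $m_{2}=l_{1}$ are not separated by anything you compute. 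You do name Gibson's $S_{o}$ as ``the stronger'' option in this regime, but you never deploy it, and these are exactly the cases where it is indispensable: after restricting to $\alpha=\{a\}$ by Lemma \ref{plem1}, the paper computes $S_{o}(P^{2,1;l_{1},l_{2}}_{a})$ to be the tuple that is empty except for the singleton $\{\mathbf{e}_{l_{2}}\}$ in slot $l_{1}$, and $S_{o}(P^{1,2;m_{1},m_{2}}_{a})$ to be empty except for $\{\mathbf{e}_{m_{1}}\}$ in slot $m_{2}$, so both indices are recorded; the cross-family coincidence would force $(l_{1},l_{2})=(m_{2},m_{1})$, contradicting $l_{1}<l_{2}$ and $m_{1}<m_{2}$. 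Adding this $S_{o}$ computation closes the gap; without it, your proof does not establish the ``homotopic if and only if equal'' clause of Theorem \ref{mthm}.
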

 
To prove this theorem, we prepare following lemmas.

\begin{lemma}\label{lem1}
\'Etale phrases $P^{1,1;l_{1},l_{2}}_{a}$, $P^{3;l}_{a}$, 
$P^{2,1;l_{1},l_{2}}_{a}$, $P^{1,2;l_{1},l_{2}}_{a}$ and 
$P^{1,1,1;l_{1},l_{2},l_{3}}_{a}$ are not homotopic to $(\emptyset)_{k}$.
\end{lemma}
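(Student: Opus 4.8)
The plan is to show that each of the five families carries one of the homotopy invariants of Section~3 that is nontrivial, whereas every such invariant vanishes on $(\emptyset)_{k}$; since homotopy preserves these invariants, this forces non-homotopy. I would first pass to desingularizations. For the families built from a letter $a$ of multiplicity three (namely $P^{3;l}_{a}$, $P^{2,1;l_{1},l_{2}}_{a}$, $P^{1,2;l_{1},l_{2}}_{a}$, $P^{1,1,1;l_{1},l_{2},l_{3}}_{a}$) the desingularization introduces three letters which I abbreviate $X:=A_{1,2}$, $Y:=A_{1,3}$, $Z:=A_{2,3}$, all with projection $a$, and the three occurrences of $a$ become $XY$, $XZ$, $YZ$, distributed among the relevant components. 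For $P^{1,1;l_{1},l_{2}}_{a}$ the multiplicity is two and the single letter $X:=A_{1,2}$ (projection $a$) occupies components $l_{1}$ and $l_{2}$.

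For $P^{1,1;l_{1},l_{2}}_{a}$ and $P^{1,1,1;l_{1},l_{2},l_{3}}_{a}$ I would use the linking vector of Proposition~\ref{lv}. In the first case components $l_{1},l_{2}$ share exactly the letter $X$, so $l_{P}(l_{1},l_{2})=a$; in the second the pairs $(l_{1},l_{2}),(l_{1},l_{3}),(l_{2},l_{3})$ share exactly $X$, $Y$, $Z$, so all three entries equal $a$. Since $a\neq 1$ in $\pi$ (it has infinite order when $\tau(a)\neq a$ and order two when $\tau(a)=a$), the linking vector is nontrivial while $lk((\emptyset)_{k})$ is trivial. (Alternatively, $P^{1,1;l_{1},l_{2}}_{a}$ is detected by the component length vector of Proposition~\ref{clv}, whose entries at $l_{1},l_{2}$ equal $1$.)

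For $P^{2,1;l_{1},l_{2}}_{a}$ and $P^{1,2;l_{1},l_{2}}_{a}$ the two letters shared between the nonempty components are $Y$ and $Z$, so $l_{P}(l_{1},l_{2})=a^{2}$. When $\tau(a)\neq a$ this is nontrivial in $\pi$ and the linking vector finishes the argument. When $\tau(a)=a$ we have $a^{2}=1$, so I would instead restrict to the $\tau$-invariant one-element set $\{a\}$: by Lemma~\ref{plem1} a homotopy to $(\emptyset)_{k}$ over $\alpha$ would yield one over $\{a\}$, and I would then apply the invariant $T$ for the one-element set. A direct check shows the only letter occurring twice inside a single component is $X$ for $P^{2,1}$ (in $w_{l_{1}}$, where it is interleaved with $Y$) and $Z$ for $P^{1,2}$ (in $w_{l_{2}}$, interleaved with $Y$), giving $T=1$ in the corresponding slot, hence $T\neq 0=T((\emptyset)_{k})$.

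The main obstacle is $P^{3;l}_{a}$, on which the component length vector (its $l$-th entry is the even number $6$), the linking vector (only one component is nonempty), and even $T$ over the two-letter alphabet all vanish, the last because the three twice-occurring letters $X,Y,Z$ contribute $+1,0,-1$ and cancel. Here I would argue componentwise: by Lemma~\ref{plem2}, if $P^{3;l}_{a}\simeq(\emptyset)_{k}$ then the $l$-th components are homotopic as \'etale words, i.e.\ the desingularization $XYXZYZ$ of the \'etale word $aaa$ is contractible. But by Turaev's Theorem~\ref{tu} the word $aaa$ is contractible if and only if $\tau(a)=a$, which is excluded by the standing convention $a\neq\tau(a)$ attached to $P^{3;l}_{a}$. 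This contradiction completes the proof.
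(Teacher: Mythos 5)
Your proof is correct and follows essentially the same route as the paper: the same case-by-case scheme using the Section 3 invariants together with Lemma \ref{plem1}, Lemma \ref{plem2} and Theorem \ref{tu}, with the genuinely delicate case $P^{3;l}_{a}$ handled identically (componentwise reduction via Lemma \ref{plem2} to Turaev's statement that $aaa$ is contractible iff $\tau(a)=a$). The only divergence is in the distribution of invariants over the remaining cases: the paper detects $P^{1,1;l_{1},l_{2}}_{a}$ with the component length vector (which you relegate to a parenthetical, using the linking vector instead), and for $P^{2,1;l_{1},l_{2}}_{a}$ and $P^{1,2;l_{1},l_{2}}_{a}$ it restricts to $\alpha=\{a,\tau(a)\}$ by Lemma \ref{plem1} and applies $T$ uniformly in both cases $a=\tau(a)$ and $a\neq\tau(a)$, obtaining $T=\mathbf{e}_{l_{1}}$ (resp.\ $\mathbf{e}_{l_{2}}$), whereas you use the linking vector (the value $a^{2}\neq 1$ in $\pi$) when $\tau(a)\neq a$ and invoke $T$ only in its one-element-set version when $\tau(a)=a$. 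Your reshuffling buys a small economy---the $\alpha_{0}$-version of $T$ is never needed---but both arguments rest on the same invariants, the same lemmas, and the same computations on the desingularizations, so this is a variation within the paper's approach rather than a different one.
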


\begin{lemma}\label{lem2}
If $a$ is not equal to $b$, then $P^{X_{1};Y_{1}}_{a}$ and $P^{X_{2};Y_{2}}_{b}$
are not homotopic for all $(X_{1};Y_{1}), (X_{2};Y_{2}) \in \{(1,1;l_{1},l_{2})
,(3;l),(2,1;l_{1},l_{2}),(1,2;l_{1},l_{2}),(1,1,1;l_{1},l_{2},l_{3})\}$.
\end{lemma}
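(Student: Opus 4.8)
The plan is to split into two cases according to whether $a$ and $b$ lie in the same $\tau$-orbit, and to detect the distinction between a letter and its $\tau$-image using the linking vector of Proposition \ref{lv}, falling back on the component-wise word classification (Lemma \ref{plem2} together with Turaev's Theorem \ref{tu}) in the one degenerate situation the linking vector cannot see.

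First I would treat the case $\widehat{a} \ne \widehat{b}$, i.e.\ $a$ and $b$ lie in different orbits of $\tau$. Here the invariant $U_{L}$ does everything at once: take $L$ to be the subset of $crs(\alpha/\tau)$ consisting only of the representative of $\widehat{a}$, so that $L \cup \tau(L) = \widehat{a}$ contains $a$ but not $b$. Since every letter of the desingularization of $P^{X_{1};Y_{1}}_{a}$ projects to $a$, the map $U_{L}$ deletes nothing and returns $P^{X_{1};Y_{1}}_{a}$ unchanged, now regarded as a nanophrase over $\widehat{a}$; whereas it deletes every letter of $P^{X_{2};Y_{2}}_{b}$ and returns $(\emptyset)_{k}$. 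By Lemma \ref{lem1} the phrase $P^{X_{1};Y_{1}}_{a}$ is not homotopic to $(\emptyset)_{k}$ over $\alpha$, hence, by Lemma \ref{plem1} applied to the $\tau$-invariant subset $\widehat{a}$, not over $\widehat{a}$ either; as $U_{L}$ is a homotopy invariant, the two phrases cannot be homotopic. This disposes of every pair of shapes simultaneously when the orbits differ.

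Next suppose $a$ and $b$ share an orbit; since $a \ne b$ this orbit has two elements, so $b = \tau(a)$ with $a \ne \tau(a)$, and by Lemma \ref{plem1} I may work over $\widehat{a} \cong \alpha_{0}$, where $\pi \cong \mathbb{Z}$ is generated by $a$ with $\tau(a) = a^{-1}$ of infinite order. The key observation is that each entry $l_{P}(i,j)$ of the linking vector is a product of projections, hence a power $a^{m}$ with $m \ge 0$ when the base letter is $a$, and a power $a^{-m}$ with $m \ge 0$ when the base letter is $\tau(a)$. For each of the shapes $P^{1,1}$, $P^{2,1}$, $P^{1,2}$, $P^{1,1,1}$ at least one pair of components shares a letter, so the linking vector has a strictly positive (resp.\ strictly negative) coordinate according as the base letter is $a$ (resp.\ $\tau(a)$); only $P^{3}$, whose letters all lie in a single component, has vanishing linking vector. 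Consequently, whenever at least one of the two phrases is not of type $P^{3}$, the linking vector of $P^{X_{1};Y_{1}}_{a}$ has all coordinates $\ge 0$ with one strictly positive while that of $P^{X_{2};Y_{2}}_{\tau(a)}$ has all coordinates $\le 0$, so the two cannot coincide, and Proposition \ref{lv} yields non-homotopy.

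The remaining, and hardest, case is $P^{3;l}_{a}$ versus $P^{3;l'}_{\tau(a)}$: the linking vector is identically trivial and the component length vector is insensitive to the interchange $a \leftrightarrow \tau(a)$, since both relevant components have even length. I would settle it with Lemma \ref{plem2}: were the two phrases homotopic, the étale words in each corresponding component would be homotopic. If $l \ne l'$, then component $l$ carries the word $aaa$ on one side and the empty word on the other, contradicting that $aaa$ is non-contractible for $a \ne \tau(a)$ by Theorem \ref{tu}. If $l = l'$, then $aaa$ in the letter $a$ would be homotopic to $aaa$ in the letter $\tau(a)$; but Theorem \ref{tu} states that non-contractible words of type $aaa$ are homotopic only when equal, and these are distinct since $a \ne \tau(a)$. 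The main obstacle throughout is exactly this insensitivity of the cruder invariants to swapping a letter with its involution image; the linking vector overcomes it away from $P^{3}$ by recording orientations in the infinite-order generator of $\pi$, and Turaev's word classification overcomes it in the $P^{3}$ case.
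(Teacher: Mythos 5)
Your proof is correct, and it follows the same overall route as the paper: the orbit dichotomy, $U_{L}$ plus Lemma \ref{lem1} when $\widehat{a}\neq\widehat{b}$, and the linking vector plus Turaev's word classification when $b=\tau(a)$. The difference is in how the same-orbit case is organized. The paper deploys the invariants pair by pair: the linking vector for $(1,1)$ against $(1,1)$ and for the pairs among $(2,1),(1,2),(1,1,1)$; the component length vector $w$ to separate $(1,1)$ from the even-length shapes; and Theorem \ref{tu} with Lemma \ref{plem2} for every pair involving $P^{3}$. Your positivity observation --- over $\widehat{a}$ the group $\pi$ is infinite cyclic, every entry of $lk$ for a phrase based at $a$ is a non-negative power of $a$ while every entry for a phrase based at $\tau(a)$ is a non-positive power, so equality of the two vectors forces both to be trivial --- collapses all of these checks into one: any pair in which at least one phrase is not of type $P^{3}$ is separated by $lk$ alone, the invariant $w$ is never needed, and Turaev's classification is invoked only for $P^{3;l}_{a}$ versus $P^{3;l'}_{\tau(a)}$ (where the paper needs it for all four pairings involving $P^{3}$). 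This is a cleaner and more economical use of the same toolbox. One citation quibble: in the distinct-orbit case you attribute to Lemma \ref{plem1} the implication that non-homotopy over $\alpha$ gives non-homotopy over $\widehat{a}$; Lemma \ref{plem1} asserts the opposite (and nontrivial) direction, while the implication you actually need is the trivial one --- a homotopy over the subalphabet is in particular a homotopy over $\alpha$ --- or, more directly, you can apply Lemma \ref{lem1} with $\widehat{a}$ itself as the ambient alphabet, since its proof uses only invariants defined over an arbitrary alphabet.
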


\begin{lemma}\label{lem3}
Two \'etale phrases  $P^{X_{1};Y_{1}}_{a}$ and $P^{X_{2};Y_{2}}_{a}$ are
homotopic if and only if  $(X_{1};Y_{1})$ is equal to $(X_{2};Y_{2})$.
\end{lemma}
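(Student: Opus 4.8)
The ``if'' direction is trivial: if $(X_1;Y_1)=(X_2;Y_2)$ the two \'etale phrases are literally the same, hence homotopic. So I concentrate on the ``only if'' direction, and the plan is to separate the five families by the invariants already assembled, evaluated on the desingularizations. First I would write out $(\mathcal{A}^{d},P^{d})$ for each of $P^{1,1;l_{1},l_{2}}_{a}$, $P^{3;l}_{a}$, $P^{2,1;l_{1},l_{2}}_{a}$, $P^{1,2;l_{1},l_{2}}_{a}$, $P^{1,1,1;l_{1},l_{2},l_{3}}_{a}$ from the desingularization rule, noting that a letter of multiplicity $3$ spawns $A_{1,2},A_{1,3},A_{2,3}$, all projecting to $a$. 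From these I read the component length vector $w$ (Proposition \ref{clv}) and the linking vector $lk$ (Proposition \ref{lv}). The decisive bookkeeping fact is that each occurrence of a multiplicity-$2$ letter adds length $1$ to its component while each occurrence of a multiplicity-$3$ letter adds length $2$; hence only $P^{1,1;l_{1},l_{2}}_{a}$ has nonzero $w$, equal to the indicator of $\{l_{1},l_{2}\}$, which isolates this family and already pins down its indices.

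For the four multiplicity-$3$ families I would compute $lk$: it equals $a$ on each of the three pairs drawn from $\{l_{1},l_{2},l_{3}\}$ for $P^{1,1,1}$, equals $a^{2}$ on the single pair $(l_{1},l_{2})$ for both $P^{2,1}$ and $P^{1,2}$, and is trivial for $P^{3;l}$. I would then split on whether $a=\tau(a)$. If $a\ne\tau(a)$, then $a$ and $a^{2}$ are distinct and nontrivial in $\pi$, so $lk$ separates $P^{3}$ (trivial), $P^{1,1,1}$ ($a$ on three pairs, recovering the triple), and the pair $\{P^{2,1},P^{1,2}\}$ ($a^{2}$ on $(l_{1},l_{2})$, recovering the pair). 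If $a=\tau(a)$ then $a^{2}=1$ but $a$ still has order $2$; here $P^{3;l}_{a}$ is excluded by convention, $lk$ is nontrivial exactly on $P^{1,1,1}$, and the same coarse separation survives.

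What remains is to split $P^{2,1;l_{1},l_{2}}_{a}$ from $P^{1,2;l_{1},l_{2}}_{a}$, which share the same $w$ and $lk$, and to recover indices in the degenerate regime. For the first I would use the invariant $T$, first passing to the orbit of $a$ by Lemma \ref{plem1} (this orbit is $\cong\alpha_{0}$ when $a\ne\tau(a)$ and the one-element set when $a=\tau(a)$): the unique letter repeated inside a single component lies in component $l_{1}$ for $P^{2,1}$ but in $l_{2}$ for $P^{1,2}$, so $T$ is supported on coordinate $l_{1}$ versus $l_{2}$. When $a=\tau(a)$ both $lk$ and $T$ are blind to the second index, and there I would invoke Gibson's $S_{o}$: its only nonempty entry is $B_{l_{1}}=\{e_{l_{1}}+e_{l_{2}}\}$ for $P^{2,1}$ (respectively $B_{l_{2}}$ for $P^{1,2}$), where $e_{l}$ denotes the standard basis vector, which recovers both the distinguished component and the full index pair. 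Finally, to separate $P^{3;l}_{a}$ from $P^{3;l'}_{a}$ I observe that $w$, $lk$ and $T$ all vanish, so I would instead apply Lemma \ref{plem2}: a homotopy would make the $l$-th components homotopic as \'etale words, but one is the desingularization of $aaa$, non-contractible by Theorem \ref{tu} since $a\ne\tau(a)$, while the other is empty.

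The main obstacle is precisely the pair $P^{2,1}$ versus $P^{1,2}$, especially when $a=\tau(a)$: here both coarse invariants $w$ and $lk$ collapse, and one must descend to the finer positional data of $T$ and, when $T$ cannot see the second index, to the full strength of $S_{o}$. The $P^{3;l}$ family is a second, milder obstruction, where every numerical invariant vanishes and only the component-wise \'etale-word classification (Lemma \ref{plem2} together with Theorem \ref{tu}) separates its members.
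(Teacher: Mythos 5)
Your proposal is correct and follows essentially the same route as the paper: the component length vector $w$ isolates $P^{1,1;l_{1},l_{2}}_{a}$ and pins its indices, the linking vector $lk$ separates $P^{1,1,1}$ from $\{P^{2,1},P^{1,2}\}$ and from $P^{3;l}$ and recovers index pairs when $a\neq\tau(a)$, the invariant $T$ splits $P^{2,1}$ from $P^{1,2}$ once the pair is pinned, Gibson's $S_{o}$ handles the degenerate case $a=\tau(a)$, and Theorem \ref{tu} with Lemma \ref{plem2} disposes of the $P^{3;l}$ comparisons --- exactly the paper's toolkit deployed for the same purposes, merely organized coarse-to-fine rather than pairwise by family. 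The one discrepancy is your computed value of $S_{o}$, namely $B_{l_{1}}=\{\mathbf{e}_{l_{1}}+\mathbf{e}_{l_{2}}\}$ for $P^{2,1;l_{1},l_{2}}_{a}$ where the paper gets $B_{l_{1}}=\{\mathbf{e}_{l_{2}}\}$ (in Gibson's definition the coordinate is determined by the component of the \emph{other} occurrence of each letter interleaved with $A$, not by every component in which that letter appears once); this traces to the ambiguous wording of $l_{j}(A)$ in Section 3.3, and either value yields the same separation conclusions, so the argument stands.
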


If we show above lemmas, then we obtain the main theorem.
We prove these lemmas in the next section.

\section{Proof of Lemmas.}
In this section, we prove Lemma \ref{lem1}, Lemma \ref{lem2} 
and Lemma\ref{lem3}.
\subsection{Proof of the Lemma \ref{lem1}.}
The first claim of this lemma is easily checked.
We show the second part of the lemma.\\
$\bullet$ The case $P^{1,1;l_{1},l_{2}}_{a} \not\simeq (\emptyset)_{k}$.\par
In this case, component length vector 
$$w(P^{1,1;l_{1},l_{2}}_{a}) = \mathbf{e}_{l_{1}} + \mathbf{e}_{l_{2}}$$
where $\mathbf{e}_{j} = (0,\cdots,0,\stackrel{j}{\check{1}},0,\cdots,0)$.
On the other hand
$$w((\emptyset)_{k})=0.$$
So we obtain $P^{1,1;l_{1},l_{2}}_{a} \not\simeq (\emptyset)_{k}$.\\
$\bullet$ The case $P^{3;l}_{a} \not\simeq (\emptyset)_{k}$.\par
By the Theorem \ref{tu}, $a^{3}=aaa$ with $a \neq \tau(a)$ is
not homotopic to 
empty nanoword $\emptyset$. Combining this fact and Lemma \ref{plem2},
we obtain  $P^{3;l}_{a} \not\simeq (\emptyset)_{k}$.\\
$\bullet$ The case $P^{2,1;l_{1},l_{2}}_{a} \not\simeq (\emptyset)_{k}$.\par
By Lemma \ref{plem1}, we can assume $\alpha = \{a,\tau(a)\}$.
In this case  
$$T(P^{2,1;l_{1},l_{2}}_{a})=\mathbf{e}_{l_{1}} \neq 0$$
(both the case $a=\tau(a)$ and the case $a \neq \tau(a)$).
So we obtain $P^{2,1;l_{1},l_{2}}_{a} \not\simeq (\emptyset)_{k}$.\\ 
$\bullet$ The case  $P^{1,2;l_{1},l_{2}}_{a} \not\simeq (\emptyset)_{k}$.\par
In this case  
$$T(P^{1,2;l_{1},l_{2}}_{a})=\mathbf{e}_{l_{2}} \neq 0$$
(both the case $a=\tau(a)$ and the case $a \neq \tau(a)$).
So we obtain $P^{1,2;l_{1},l_{2}}_{a} \not\simeq (\emptyset)_{k}$.\\
$\bullet$ The case 
$P^{1,1,1;l_{1},l_{2},l_{3}}_{a} \not\simeq (\emptyset)_{k}$.\par 
In this case 
$$l_{P^{1,1,1;l_{1},l_{2},l_{3}}_{a}}(l_{1},l_{2})=a \in \pi.$$
Note that $a$ is not equal to the unit element $1$ in $\pi$.
So we obtain $P^{1,1,1;l_{1},l_{2},l_{3}}_{a} \not\simeq (\emptyset)_{k}$.\par
Now we finished prove the lemma. 

\subsection{Proof of the Lemma \ref{lem2}.}
By Lemma \ref{plem1}, we can assume $\alpha=\{a,\tau(a),b,\tau(b)\}$.\par
Suppose $\hat{a} \neq \hat{b}$. Let $crs(\alpha / \tau) = \{a,b\}$
and $L = \{a\}$.
Then 
$$U_{L}(P^{X_{1};Y_{1}}_{a}) = P^{X_{1};Y_{1}}_{a}.$$
On the other hand, 
$$U_{L}(P^{X_{2};Y_{2}}_{b}) = (\emptyset)_{k}.$$
So by Lemma \ref{lem1}, we obtain $P^{X_{1};Y_{1}}_{a}$ is not homotopic
to $P^{X_{2};Y_{2}}_{b}$ for all 
$(X_{1};Y_{1}), (X_{2};Y_{2})$ $\in \{(1,1;l_{1},l_{2})
,(3;l),(2,1;l_{1},l_{2}),(1,2;l_{1},l_{2}),(1,1,1;l_{1},l_{2},l_{3})\}$.\par
Suppose $\hat{a} = \hat{b}$. By the assumption $a$ is not equal to $b$,
$Card(\hat{a})=2$ and $b=\tau(a)$.\\
$\bullet$ On $P^{1,1;l_{1},l_{2}}_{a} \not\simeq P^{1,1;m_{1},m_{2}}_{\tau(a)}$.\par
In this case 
$$l_{P^{1,1;l_{1},l_{2}}_{a}}(l_{1},l_{2}) = a.$$
On the other hand,
$$l_{P^{1,1;m_{1},m_{2}}_{\tau(a)}}(l_{1},l_{2}) =
\begin{cases} \tau(a) = a^{-1} \ ( \ if \ (l_{1},l_{2})=(m_{1}.m_{2}) \ ), \\ 
              1 \ \ (otherwise).
\end{cases}$$  
So $l_{P^{1,1;l_{1},l_{2}}_{a}}(l_{1},l_{2})$ is not equal to 
$l_{P^{1,1;m_{1},m_{2}}_{\tau(a)}}(l_{1},l_{2})$. 
So we obtain 
$P^{1,1;l_{1},l_{2}}_{a} \not\simeq P^{1,1;m_{1},m_{2}}_{\tau(a)}$.\\ 
$\bullet$ On $P^{1,1;l_{1},l_{2}}_{a} \not\simeq P^{X;Y}_{\tau(a)}$ for all 
$(X;Y) \neq (1,1;m_{1},m_{2})$.\par
In this case,
$$w(P^{1,1;l_{1},l_{2}}_{a}) = \mathbf{e}_{l_{1}} + \mathbf{e}_{l_{2}},$$ 
and
$$w( P^{X;Y}_{\tau(a)}) = 0.$$
So we obtain  $P^{1,1;l_{1},l_{2}}_{a} \not\simeq P^{X;Y}_{\tau(a)}$.\\
$\bullet$ On $P^{3;l}_{a} \not\simeq P^{X;Y}_{\tau(a)}$ 
 for all $(X;Y) \neq (1,1;m_{1},m_{2})$.\par
This is obtained from Theorem \ref{tu} and Lemma \ref{plem2}.\\
$\bullet$ On $P^{2,1;l_{1},l_{2}}_{a} \not\simeq P^{X;Y}_{\tau(a)}$
for all 
$(X;Y) \neq (1,1;m_{1},m_{2}), (3;m)$.\par
In this case,
$$l_{P^{2,1;l_{1},l_{2}}_{a}}(l_{1},l_{2}) = a^{2}.$$
On the other hand
$$l_{P^{2,1;m_{1},m_{2}}_{\tau(a)}}(l_{1},l_{2}) = 
\begin{cases} 
\tau(a)^{2} = a^{-2} \ ( \ if \ (l_{1},l_{2})=(m_{1}.m_{2}) \ ), \\ 
              1 \ \ (otherwise),
\end{cases}$$
$$l_{P^{1,2;m_{1},m_{2}}_{\tau(a)}}(l_{1},l_{2}) = 
\begin{cases} 
\tau(a)^{2} = a^{-2} \ ( \ if \ (l_{1},l_{2})=(m_{1}.m_{2}) \ ), \\ 
              1 \ \ (otherwise),
\end{cases}$$
and
$$l_{P^{1,1,1;m_{1},m_{2},m_{3}}_{\tau(a)}}(l_{1},l_{2}) = 
\begin{cases} 
\tau(a) = a^{-1} \ ( \ if \ \exists(m_{i},m_{j})=(l_{1}.l_{2}) \ ), \\ 
              1 \ \ (otherwise).
\end{cases}$$
So we obtain  $P^{2,1;l_{1},l_{2}}_{a} \not\simeq P^{X;Y}_{\tau(a)}$.\\
$\bullet$ On $P^{1,2;l_{1},l_{2}}_{a} \not\simeq P^{X;Y}_{\tau(a)}$
for all 
$(X;Y) \neq (1,1;m_{1},m_{2}), (3;m),(2,1;m_{1},m_{2})$.\par 
In this case,
$$l_{P^{1,2;l_{1},l_{2}}_{a}}(l_{1},l_{2}) = a^{2}.$$
On the other hand
$$l_{P^{1,2;m_{1},m_{2}}_{\tau(a)}}(l_{1},l_{2}) = 
\begin{cases} 
\tau(a)^{2} = a^{-2} \ ( \ if \ (l_{1},l_{2})=(m_{1}.m_{2}) \ ), \\ 
              1 \ \ (otherwise),
\end{cases}$$
and
$$l_{P^{1,1,1;m_{1},m_{2},m_{3}}_{\tau(a)}}(l_{1},l_{2}) = 
\begin{cases} 
\tau(a) = a^{-1} \ ( \ if \ \exists(m_{i},m_{j})=(l_{1}.l_{2}) \ ), \\ 
              1 \ \ (otherwise).
\end{cases}$$
So we obtain  $P^{1,2;l_{1},l_{2}}_{a} \not\simeq P^{X;Y}_{\tau(a)}$.\\
$\bullet$ On $P^{1,1,1;l_{1},l_{2},l_{3}}_{a} \not\simeq
P^{1,1,1;m_{1},m_{2},m_{3}}_{\tau(a)}$.\par
In this case
$$l_{P^{1,1,1;l_{1},l_{2},l_{3}}_{a}}(l_{1},l_{2})=a,$$
and
$$l_{P^{1,1,1;m_{1},m_{2},m_{3}}_{\tau(a)}}(l_{1},l_{2}) = 
\begin{cases} 
\tau(a) = a^{-1} \ ( \ if \ \exists(m_{i},m_{j})=(l_{1}.l_{2}) \ ), \\ 
              1 \ \ (otherwise).
\end{cases}$$
So we obtain  $P^{1,1,1;l_{1},l_{2},l_{3}}_{a} \not\simeq
P^{1,1,1;m_{1},m_{2},m_{3}}_{\tau(a)}$.\par
Now we have completed the proof of Lemma \ref{lem2}.

\subsection{Proof of the Lemma \ref{lem3}.}
$\bullet$ On $P^{1,1;l_{1},l_{2}}_{a}$.\par 
In this case
$$w(P^{1,1;l_{1},l_{2}}_{a})= \mathbf{e}_{l_{1}}+\mathbf{e}_{l_{2}},$$
and
$$w(P^{X;Y}_{a})=0$$
for all $(X;Y) \neq (1,1;m_{1},m_{2})$.
So we obtain  $P^{1,1;l_{1},l_{2}}_{a} \simeq P^{X;Y}_{a}$ 
if and only if  $(X;Y)=(1,1;l_{1},l_{2})$.\\
$\bullet$ On $P^{3;l}_{a}$.\par
By Theorem \ref{tu} and Lemma \ref{plem2},
We obtain  $P^{3;l}_{a} \simeq P^{X;Y}_{a}$ 
if and only if  $(X;Y)=(3;l)$.\\
$\bullet$ On $P^{2,1;l_{1},l_{2}}_{a}$.\par
The case $P^{2,1;l_{1},l_{2}}_{a} \not\simeq P^{2,1;m_{1},m_{2}}_{a}$ if 
$(l_{1},l_{2}) \neq (m_{1},m_{2})$.\par
If $a \neq \tau(a)$, then
$$l_{P^{2,1;l_{1},l_{2}}_{a}}(l_{1},l_{2})=a^{2},$$
and 
$$l_{P^{2,1;m_{1},m_{2}}_{a}}(l_{1},l_{2})= 1 \neq a^{2}.$$
So we obtain  $P^{2,1;l_{1},l_{2}}_{a} \not\simeq P^{2,1;m_{1},m_{2}}_{a}$ if 
$(l_{1},l_{2}) \neq (m_{1},m_{2})$.\par
If $a = \tau(a)$, then by Lemma \ref{plem1} 
we can assume $\alpha = \{a\}$.
So we can use Gibson's $S_{o}$ invariant.
In this case
$$S_{o}(P^{2,1;l_{1},l_{2}}_{a})=
(\emptyset,\cdots,\emptyset,\stackrel{l_{1}}{\check{\{\mathbf{e}_{l_{2}}\}}},
\emptyset, \cdots ,\emptyset),$$
and
$$S_{o}(P^{2,1;m_{1},m_{2}}_{a})=
(\emptyset,\cdots,\emptyset,\stackrel{m_{1}}{\check{\{\mathbf{e}_{m_{2}}\}}},
\emptyset, \cdots ,\emptyset).$$
So we obtain 
$P^{2,1;l_{1},l_{2}}_{a} \not\simeq P^{2,1;m_{1},m_{2}}_{a}$ if 
$(l_{1},l_{2}) \neq (m_{1},m_{2})$.\par
The case $P^{2,1;l_{1},l_{2}}_{a} \not\simeq P^{1,2;m_{1},m_{2}}_{a}$.\par
If $a \neq \tau(a)$ and $(l_{1},l_{2}) \neq (m_{1},m_{2})$, then
$$l_{P^{2,1;l_{1},l_{2}}_{a}}(l_{1},l_{2})=a^{2},$$ 
and
$$l_{P^{1,2;m_{1},m_{2}}_{a}}(l_{1},l_{2})=1.$$
If $a \neq \tau(a)$ and $(l_{1},l_{2}) = (m_{1},m_{2})$, then
$$T(P^{2,1;l_{1},l_{2}}_{a})=\mathbf{e}_{l_{1}},$$
and
$$T(P^{1,2;l_{1},l_{2}}_{a})= - \mathbf{e}_{l_{2}}.$$
Since $l_{1}$ is not equal to $l_{2}$,
$$T(P^{2,1;l_{1},l_{2}}_{a}) \neq  T(P^{1,2;l_{1},l_{2}}_{a}).$$
If $a = \tau(a)$, then 
$$S_{o}(P^{2,1;l_{1},l_{2}}_{a})=
(\emptyset,\cdots,\emptyset,\stackrel{l_{1}}{\check{\{\mathbf{e}_{l_{2}}\}}}
,\emptyset, \cdots ,\emptyset),$$
and
$$S_{o}(P^{1,2;m_{1},m_{2}}_{a})=
(\emptyset,\cdots,\emptyset,\stackrel{m_{2}}{\check{\{\mathbf{e}_{m_{1}}\}}}
,\emptyset, \cdots ,\emptyset).$$
So if $P^{2,1;l_{1},l_{2}}_{a} \simeq P^{1,2;m_{1},m_{2}}_{a}$, 
then $(l_{1},l_{2})=(m_{2},m_{1})$.
However this contradict the 
assumption $l_{1} < l_{2}$ and $m_{1} < m_{2}$.
By the above, we obtain 
$P^{2,1;l_{1},l_{2}}_{a} \not\simeq P^{1,2;m_{1},m_{2}}_{a}$. \par
The case $P^{2,1;l_{1},l_{2}}_{a} \not\simeq P^{1,1,1;m_{1},m_{2},m_{3}}_{a}$.\par
In this case,
$$lk(P^{1,1,1;m_{1},m_{2},m_{3}}_{a})= 
\begin{cases} 
(1,\cdots,1,a^{2},1,\cdots,1) \ ( \ if \ a \neq \tau(a) \ ), \\ 
              (1,\cdots,1) \ \ ( \ if \ a = \tau(a) \ ).
\end{cases}$$
On the other hand,
$$lk( P^{1,1,1;m_{1},m_{2},m_{3}}_{a})=
(1,\cdots,1,a,1,\cdots,1,a,1,\cdots,1,a,1,\cdots,1).$$
So we obtain $lk(P^{2,1;l_{1},l_{2}}_{a}) \neq lk(P^{1,1,1;m_{1},m_{2},m_{3}}_{a})$.
This implies 
$P^{2,1;l_{1},l_{2}}_{a} \not\simeq P^{1,1,1;m_{1},m_{2},m_{3}}_{a}$.\\
$\bullet$ On $P^{1,2;l_{1},l_{2}}_{a}$.\par
This case proved similarly as the case on $P^{2,1;l_{1},l_{2}}_{a}$.\\
$\bullet$ On $P^{1,1,1;l_{1},l_{2},l_{3}}_{a}$.\par
In this case,
$$l_{P^{1,1,1;l_{1},l_{2},l_{3}}_{a}}(i,j) = 
\begin{cases} 
 a \ ( \ if \ (i,j)=(l_{1},l_{2}),(l_{1},l_{3}),(l_{2},l_{3}) \ ), \\ 
              1 \ \ (otherwise).
\end{cases}$$
So we obtain  $P^{1,1,1;l_{1},l_{2},l_{3}}_{a} \simeq 
P^{1,1,1;m_{1},m_{2},m_{3}}_{a}$ if and only if 
$(l_{1},l_{2},l_{3})=(m_{1},m_{2},m_{3})$.\par
Now we have completed the proof of Lemma \ref{lem3}.

\vspace{0.3cm}
Department of Mathematics, Hokkaido University

Sapporo 060-0810, Japan

e-mail: fukunaga@math.sci.hokudai.ac.jp

\end{document}